\newcommand{\inlineitem}[1][]{%
\ifnum\enit@type=\tw@
    {\descriptionlabel{#1}}
  \hspace{\labelsep}%
\else
  \ifnum\enit@type=\z@
       \refstepcounter{\@listctr}\fi
    \quad\@itemlabel\hspace{\labelsep}%
\fi} \makeatother
\newcommand{\gn}{\nu}
\newcommand{\gf}{\phi}
\newcommand{\gch}{\chi}
\newcommand{\Gd}{\Delta}
\newcommand{\subs}{\subset}
\newcommand{\sups}{\supset}
\newcommand{\bs}{\backslash}
\newcommand{\ti}{\tilde}
\newcommand{\mbb}{\mathbb}
\newcommand{\mcl}{\mathcal}
\newcommand{\us}{\underset}
\newcommand{\os}{\overset}
\newcommand{\lra}{\longrightarrow}
\newcommand{\N}{\mbb N}
\newcommand{\R}{\mcl R}
\newcommand{\Ra}{\Rightarrow}
\newcommand{\es}{\emptyset}
\newcommand{\equ}[1]{%
\begin{equation*}
#1
\end{equation*}
}
\newcommand{\equa}[1]{%
\begin{equation*}
\begin{aligned}
#1
\end{aligned}
\end{equation*}
}
\newcommand{\equan}[2]{%
\begin{equation}
\label{Eq:#1}
\begin{aligned}
#2
\end{aligned}
\end{equation}
}
\DeclareMathOperator{\Det}{Det}
\newcommand{\vmattwo}[4]{%
\begin{vmatrix}
  #1 & #2\\ #3 & #4
\end{vmatrix}
}
\newcommand{\vmatthree}[9]{%
\begin{vmatrix}
  #1 & #2 & #3\\ #4 & #5 & #6\\ #7 & #8 & #9
\end{vmatrix}
}
\newtheorem{theorem}{Theorem}[section]
\newtheorem{lemma}[theorem]{Lemma}
\theoremstyle{definition}
\newtheorem{defn}[theorem]{Definition}
\newtheorem{example}[theorem]{Example}
\theoremstyle{remark}
\newtheorem{note}[theorem]{Note}
\newtheorem{remark}[theorem]{Remark}
\numberwithin{equation}{section}
\def\namedlabel#1#2{\begingroup
   \def\@currentlabel{#2}%
   \label{#1}\endgroup
}
\newtheorem*{thmA}{\bf{Theorem A}}
\begin{document}
\title[On the Enumeration of a Certain Type of Hyperplane Arrangements]{On the Enumeration of a Certain Type of Hyperplane Arrangements}
\author[C.P. Anil Kumar]{Author: C.P. Anil Kumar*}
\address{Center for Study of Science, Technology and Policy
\# 18 \& \#19, 10th Cross, Mayura Street,
Papanna Layout, Nagashettyhalli, RMV II Stage,
Bengaluru - 560094
Karnataka,INDIA
}
\email{akcp1728@gmail.com}
\thanks{*The author is supported by a research grant and facilities provided by Center for study of Science, 
Technology and Policy (CSTEP), Bengaluru, INDIA for this research work.}
\subjclass{Primary: 52C35 Secondary: 51H10,14P10,14P25}
\keywords{Linear inequalities in many variables, hyperplane arrangements}
\date{\sc \today}
\begin{abstract}
In this article we prove in the main theorem that, there is a bijection between the isomorphism classes of a certain type of real hyperplane arrangements on the one hand, and the antipodal pairs of convex cones of an associated discriminantal arrangement on the other hand. The type of hyperplane arrangements considered and the isomorphism classes have been defined precisely. As a consequence we enumerate such isomorphism classes by computing the characteristic polynomial of the discriminantal arrangement. With a certain restriction, the enumerated value is shown to be independent of the discriminantal arrangement. Later we observe that the restriction we impose on the type of hyperplane arrangements is a mild one and that this conditional restriction is quite generic. Moreover the restriction is defined in terms of a normal system being  concurrency free which is a generic condition. We also discuss two examples of normal systems which are not concurrency free in the last section and enumerate the number of isomorphism classes.
\end{abstract}
\maketitle
\section{\bf{Introduction and a brief survey}}
Enumeration of line arrangements in the real projective plane up to an equivalence has been studied by W.~B. Carver~\cite{MR0005632} subsequent to his work on systems of linear inequalities~\cite{MR1502612}.
The notion of equivalence defined in ~\cite{MR0005632} is in terms of regions (see the definition on Page 674). In~\cite{MR0005632} on page $674$, it is mentioned that the problem of finding how many non-equivalent figures $F_n$ (an arrangement of 
$n$-lines) in the real projective plane, exist, for large $n$ is still unanswered. For some initial values of $n$ a complete list of representatives for equivalence classes is known. The initial seven values for $1\leq n\leq 7$ are given by $1,1,1,1,1,4,11$. For the next four values ($8\leq n\leq 11$) refer to Sloane's OEIS A018242 at \url{https://oeis.org/A018242} or J.~E.~Goodman, J.~O'Rourke, C.~D.~T\'{o}th~\cite{MR3793131}, Chapter 5, Section 5.6, Page 146, Table 5.6.1, second row on simple arrangements of n lines. In this article we consider Euclidean arrangements instead of arrangements in a projective space.

Concurrency Geometries have been studied by H.~H.~Crapo in~\cite{MR0766269} in order to solve problems on configurations of hyperplane arrangements in a Euclidean space. Imagine a finite set $\mcl{H}_n^m$ of $n$-hyperplanes in $\mbb{R}^m$ for some $n,m\in \N$ which can move freely and whose normal directions arise from a fixed finite set $\mcl{N}$ of cardinality $n$ which is generic, that is, any subset $\mcl{M}\subseteq \mcl{N}$ of cardinality at most $m$ is linearly independent. They give rise to different hyperplane arrangements in $\mbb{R}^m$. This paper is devoted to the combinatorial study of such configurations of hyperplane arrangements. In particular we answer a question of enumerating equivalence classes of this type of hyperplane arrangements over $\mbb{R}^m,m\in \mbb{N}$, though in principle, the field $\mbb{R}$ can be replaced by an ordered field $\mbb{F}$. 
The precise definitions and the equivalence notion to describe the isomorphism classes of hyperplane arrangements are given below before stating the main theorem. 

This main theorem leads to interesting combinatorics. A brief survey after remark~\ref{Remark:Methodology} and a new view point in the more relevant Section~\ref{sec:CCA}, describing the combinatorial aspects are mentioned after stating the main theorem.

\begin{remark}
For those who are interested in generalising to ordered fields other than the subfields of $\mbb{R}$, I would like to mention that the basic theory of ordered fields is given in N.~Jacobson~\cite{MR0780184}~(Chapter $5$),\cite{MR1009787}~(Chapter $11$) and S.~Lang~\cite{MR1878556} (Chapter $11$).
\end{remark}

\section{\bf{Definitions and statement of the main result}}
\label{sec:StatementMainResult}
We begin the section with a few definitions.
\begin{defn}[A Hyperplane Arrangement, A Generic Hyperplane Arrangement, A Central Hyperplane Arrangement]
	\label{defn:HA}
	~\\
	Let $m,n$ be positive integers. We say a set 
	\equ{\mcl{H}_n^m=\{H_1,H_2,\ldots,H_n\}} of $n$ affine hyperplanes in $\mbb{R}^m$ forms a 
	hyperplane arrangement. We say that they form a generic hyperplane arrangement or a hyperplane arrangement in general position, if Conditions 1,2 are satisfied.
	\begin{itemize}
		\item Condition 1: For $1\leq r \leq m$, the intersection of any $r$ hyperplanes has dimension $m-r$.
		\item Condition 2: For $r>m$, the intersection of any $r$ hyperplanes is empty.
	\end{itemize}
We say that the hyperplane arrangement $\mcl{H}_n^m$ is central if $\us{i=1}{\os{n}{\cap}}H_i\neq \es$.
\end{defn}
Since this article is about enumeration of a certain type of isomorphism classes of hyperplane arrangements, a basic enumeration result (Theorem~\ref{theorem:NumberofRegions}) in this subject, is mentioned . 
\begin{note}
	Any finite set of $n$-hyperplanes in $\mbb{R}^m$ divides the Euclidean space into finitely many regions which may be bounded or unbounded (in the usual sense). We define these non-empty regions as polyhedral regions. 
	There are $2^n$ choices of inequalities for describing the regions. However only a few of the choices of inequalities give rise to non-empty regions as given by the following theorem whose proof is well known in the literature on hyperplane arrangements. 
	Refer to R.~Stanley~\cite{MR2868112}, Proposition $3.11.8$ and his notes on Page $347$ for literature. Also refer to R.~C.~Buck~\cite{MR0009119}.
\end{note}
In this article, from now on, a polyhedral region means a non-empty polyhedral region.
\begin{theorem}
	\label{theorem:NumberofRegions}
	Let $n,m$ be positive integers and $\mcl{H}_n^m$ be a generic hyperplane arrangement. Then there are
	\begin{itemize}
		\item $\us{i=0}{\os{m}{\sum}}\binom{n}{i}$ polyhedral regions, 
		\item $\binom{n-1}{m}$ bounded polyhedral regions and
		\item $\us{i=0}{\os{m-1}{\sum}}\binom{n}{i}+\binom{n-1}{m-1}$ unbounded polyhedral regions.
	\end{itemize} 
\end{theorem}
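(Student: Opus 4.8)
The plan is to prove all three counts together by induction on the number $n$ of hyperplanes, adding one hyperplane at a time. Fix a generic arrangement $\mcl{H}_n^m=\{H_1,\ldots,H_n\}$ and set $\mcl{A}'=\{H_1,\ldots,H_{n-1}\}$; by Definition~\ref{defn:HA} the subarrangement $\mcl{A}'$ is again generic. The basic tool is a \emph{slicing lemma}: the $n-1$ traces $H_i\cap H_n$ ($1\le i\le n-1$) form a generic arrangement of $n-1$ hyperplanes inside $H_n\cong\mbb{R}^{m-1}$. This is immediate from Conditions 1 and 2, because the intersection of any $r$ of the traces equals the intersection of $r+1$ of the original hyperplanes, whose dimension is controlled by those conditions. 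A region $R$ of $\mcl{A}'$ is either disjoint from $H_n$, in which case it remains a single region of $\mcl{H}_n^m$, or it meets $H_n$ in its interior, in which case $H_n$ cuts $R$ into exactly two regions; since $R$ is convex, $R\cap H_n$ is then a single region of the induced arrangement on $H_n$, and this sets up a bijection between the regions of $\mcl{A}'$ that get cut and the regions of the induced arrangement. Writing $f(n,m)$ for the number of polyhedral regions, this gives $f(n,m)=f(n-1,m)+f(n-1,m-1)$; together with the base cases $f(0,m)=1$ and $f(k,0)=1$ and Pascal's rule, one verifies $f(n,m)=\sum_{i=0}^{m}\binom{n}{i}$.

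For the bounded regions I would sharpen the slicing lemma with a recession-cone argument. Suppose $H_n$ cuts a region $R$ of $\mcl{A}'$ into $R^{+}$ and $R^{-}$. If $R$ is bounded, then so are $R^{+}$ and $R^{-}$, and $R\cap H_n$ is a bounded region of the induced arrangement. If $R$ is unbounded, any nonzero recession direction of $R$ cannot be parallel to $H_n$ (otherwise $R\cap H_n$ would be unbounded), so by convexity the recession cone of $R$ lies entirely on one side of $H_n$; the piece of $R$ on the other side then has trivial recession cone and hence is bounded, while the remaining piece is unbounded. Consequently, whether $R\cap H_n$ is bounded is exactly what decides whether cutting $R$ produces a new bounded region. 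Summing over all regions of $\mcl{A}'$, the number of bounded regions gained when $H_n$ is added equals the number of bounded regions of the induced arrangement on $H_n$. Writing $b(n,m)$ for the number of bounded regions, this yields $b(n,m)=b(n-1,m)+b(n-1,m-1)$, and with the base cases ($b(1,m)=0$ for $m\ge1$, and $b(k,0)=1$) and Pascal's rule one gets $b(n,m)=\binom{n-1}{m}$.

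The third count is then automatic: the number of unbounded regions is $f(n,m)-b(n,m)=\sum_{i=0}^{m}\binom{n}{i}-\binom{n-1}{m}$, and since $\binom{n}{m}-\binom{n-1}{m}=\binom{n-1}{m-1}$ by Pascal's rule, this equals $\sum_{i=0}^{m-1}\binom{n}{i}+\binom{n-1}{m-1}$, as claimed.

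The only step with real content is the sharpened slicing lemma for bounded regions: the total-region recursion is a standard sweeping argument, and once the bookkeeping of which of $R^{+},R^{-}$ stays bounded is settled by the recession-cone discussion, everything else is routine binomial identities. As an alternative that avoids the induction, one can observe that the intersection poset of a generic arrangement is, by Conditions 1 and 2, the part of the Boolean lattice $B_n$ of corank at most $m$, so that its characteristic polynomial is $\chi(t)=\sum_{i=0}^{m}(-1)^{i}\binom{n}{i}t^{m-i}$; then Zaslavsky's theorem (Proposition $3.11.8$ in~\cite{MR2868112}) gives the number of regions as $(-1)^{m}\chi(-1)=\sum_{i=0}^{m}\binom{n}{i}$ and the number of bounded regions as $(-1)^{m}\chi(1)=\binom{n-1}{m}$, using $\sum_{i=0}^{m}(-1)^{i}\binom{n}{i}=(-1)^{m}\binom{n-1}{m}$.
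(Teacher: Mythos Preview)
The paper does not actually prove this theorem; it only records the statement and refers the reader to Proposition~3.11.8 of Stanley~\cite{MR2868112} and to Buck~\cite{MR0009119}. Your second argument, computing the characteristic polynomial of the truncated Boolean lattice and then invoking Zaslavsky's theorem, is precisely the content of the Stanley reference the paper points to, so in that sense you have supplied exactly the proof the paper outsources. Your primary inductive ``slicing'' argument is essentially Buck's original approach, so both routes you give are the two standard ones hiding behind the paper's citations.

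One small point of care in the recession-cone step. The claim ``by convexity the recession cone of $R$ lies entirely on one side of $H_n$'' tacitly assumes that this recession cone is pointed: if it contained a line $\mbb{R}d$ with $d$ not parallel to $H_n$, then $d$ and $-d$ would lie on opposite sides, and the convex combination landing on the hyperplane would be the zero vector, which does not contradict ``no nonzero recession direction is parallel to $H_n$''. In that situation both $R^{+}$ and $R^{-}$ are unbounded, so your conclusion would fail. Fortunately this case cannot occur under your hypothesis: if $R\cap H_n$ is bounded then the induced arrangement on $H_n$ has a bounded region, which by the (already established) formula forces $n-2\ge m-1$, i.e.\ $n-1\ge m$; hence $\mcl{A}'$ has at least $m$ hyperplanes and, by genericity, its regions contain no lines. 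It would be worth inserting this one sentence so that the convexity step is fully justified. With that addition your proof is complete.
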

 \begin{defn}[Normal System]
 	\label{defn:NS}
 	~\\
 Let $\mcl{N}=\{L_1,L_2,\ldots,L_n\}$ be a finite set of lines passing through the origin in $\mbb{R}^m$. Let $\mcl{U}=\{\pm v_1,\pm v_2,\ldots,\pm v_n\}$ be a set of antipodal pairs of non-zero vectors on these lines. We say that
 	$\mcl{N}$ forms a normal system, if the set 
 	\equ{\mcl{B}=\{v_1,v_2,\ldots,v_n\}}
 	of vectors has the property that, any subset $\mcl{C}\subs \mcl{B}$ of cardinality at most $m$ is a linearly independent set. 
 \end{defn}
\begin{defn}[Hyperplane arrangement given by a normal system]
	\label{defn:HAGivenbyNS}
	~\\
	Let $\mcl{N}=\{L_1,L_2,\ldots,L_n\}$ be a normal system in $\mbb{R}^m$. Let 
	$\mcl{U}=\{\pm (a_{i1},a_{i2},\ldots,a_{im})\mid 0\neq (a_{i1},a_{i2},\ldots,a_{im})\in L_i, 1\leq i\leq n\}$ 
	be a set of antipodal pairs of vectors of the normal system $\mcl{N}$. We fix the  matrix $[a_{ij}]_{1\leq i\leq n,1\leq j\leq m}\in M_{n\times m}(\mbb{R})$. Let $\mcl{H}_n^m=\{H_1,H_2,\ldots,H_n\}$ 
	be any hyperplane arrangement in $\mbb{R}^m$ whose equations are given by 
	\equ{H_i:\us{j=1}{\os{m}{\sum}} a_{ij}x_j=b_i \text{ for some }b_i\in \mbb{R}.}
	We say that the hyperplane arrangement $\mcl{H}_n^m$ is given by the normal system $\mcl{N}$.
\end{defn}
\begin{defn}[Normal System Associated to a Generic Hyperplane Arrangement]
	\label{defn:NSAHA}
	~\\
	Let $\mcl{H}^m_n=\{H_i:\us{j=1}{\os{m}{\sum}}a_{ij}x_j=b_i,1\leq i \leq n\}$ be a generic hyperplane arrangement. Then the normal system $\mcl{N}$
	associated to $\mcl{H}^m_n$ is given by
	\equ{\mcl{N}=\{L_i=\{t(a_{i1},a_{i2},\ldots,a_{im})\in \mbb{R}^m\mid t\in \mbb{R}\}\mid 1\leq i \leq n\}}
	and a set of antipodal pairs of normal vectors is given by 
	\equ{\mcl{U}=\{\pm v_1,\ldots,\pm v_n\}} where $0\neq v_i\in L_i, 1\leq i \leq n$.
	For example we can choose by default 
	\equ{\mcl{U}=\{\pm (a_{i1},a_{i2},\ldots,a_{im})\in \mbb{R}^m\mid 1\leq i \leq n\}.}
\end{defn}

\begin{defn}[Isomorphism Between Two Generic Hyperplane Arrangements]
	\label{defn:Iso}
	Let
	\equ{(\mcl{H}_n^m)_1=\{H^1_1,H^1_2,\ldots,H^1_n\},
		(\mcl{H}_n^m)_2=\{H^2_1,H^2_2,\ldots,H^2_n\}}
	be two generic hyperplane arrangements in $\mbb{R}^m$. We say a map 
	$\gf:(\mcl{H}_n^m)_1 \lra (\mcl{H}_n^m)_2$
	is an isomorphism between these two generic hyperplane arrangements if $\gf$ is a bijection between the sets
	$(\mcl{H}_n^m)_1,(\mcl{H}_n^m)_2$, in particular on the subscripts $1\leq i\leq n$ satisfying the following property: given 
	$1\leq i_1<i_2<\ldots<i_{m-1}\leq n$ and lines \equ{L=H^1_{i_1}\cap H^1_{i_2}\cap \ldots \cap H^1_{i_{m-1}},
		M=H^2_{\gf(i_1)}\cap H^2_{\gf(i_2)}\cap \ldots \cap H^2_{\gf(i_{m-1})},}
	the order of vertices, that is, zero dimensional intersections on the lines $L,M$, agrees via the bijection induced by $\gf$ again 
	on the sets of subscripts of cardinality $m$ (corresponding to the vertices on $L$) containing 
	$\{i_1,i_2,\ldots,i_{m-1}\}$ and those (corresponding to the vertices on $M$) containing 
	$\{\gf(i_1),\gf(i_2),\ldots,\gf(i_{m-1})\}$. There are four possibilities of pairs of orders and any one
	pairing of orders out of these four pairs must agree via the map induced by $\gf$. We say the isomorphism $\gf$ preserves subscripts or $\gf$ is trivial on subscripts if in addition to being an isomorphism it satisfies $\gf(H_i^1)=H_i^2$ for $1\leq i\leq n$.
\end{defn}
\begin{note}
	If there is an isomorphism between two generic hyperplane arrangements $(\mcl{H}_n^m)_i,i=1,2$,
	then there exists a piecewise linear bijection of $\mbb{R}^m$ to $\mbb{R}^m$ which takes one arrangement to 
	another, using suitable triangulation of the polyhedral regions. For obtaining a piecewise linear isomorphism 
	extension from vertices to the one-dimensional skeleton of the arrangements, further subdivision is not needed.
\end{note}
Now we define an arrangement of hyperplanes which is not a generic hyperplane arrangement but is a central arrangement which in the literature is known as the discriminantal arrangement or the Manin-Schechtman arrangement (refer to Page 205, Section 5.6 in P.~Orlik and H.~Terao~\cite{MR1217488}). Some of the authors who have worked on the discriminantal arrangements are  C.~A.~Athanasiadis~\cite{MR1720104}, M.~Bayer and K.~Brandt~\cite{MR1456579}, M.~Falk~\cite{MR1209098}, Yu.~I.~Manin and V.~V.~Schechtman~\cite{MR1097620} and more recently A.~Libgober and S.~Settepanella~\cite{MR3899551}.  
We mention the definition here.
\begin{defn}[Discriminantal Arrangement-A Central Arrangement]
	\label{defn:CA}
	~\\
	Let \equ{\mcl{H}_n^m=\{H_1,H_2,\ldots,H_n\}} 
	be a generic hyperplane arrangement of $n$ hyperplanes in $\mbb{R}^m$. Let the equation for $H_i$ be given by 
	\equ{\us{j=1}{\os{m}{\sum}}a_{ij}x_j=b_i,\text{ with } a_{ij}, b_i\in \mbb{R}, 1\leq j\leq m, 1\leq i\leq n.}
	For every $1\leq i_1<i_2<\ldots<i_{m+1}\leq n$ consider the hyperplane $M_{\{i_1,i_2,\ldots,i_{m+1}\}}$ 
	passing through the origin in $\mbb{R}^n$ in the variables $y_1,y_2,\ldots,y_n$ whose equation is given 
	by 
	\equ{\Det
		\begin{pmatrix}
			a_{i_11} & a_{i_12} & \cdots & a_{i_1(m-1)} & a_{i_1m} & y_{i_1}\\
			a_{i_21} & a_{i_22} & \cdots & a_{i_2(m-1)} & a_{i_2m} & y_{i_2}\\
			\vdots   & \vdots   & \ddots & \vdots       & \vdots   & \vdots\\
			a_{i_{m-1}1} & a_{i_{m-1}2} & \cdots & a_{i_{m-1}(m-1)} & a_{i_{m-1}m} & y_{i_{m-1}}\\
			a_{i_m1} & a_{i_m2} & \cdots & a_{i_m(m-1)} & a_{i_mm} & y_{i_m}\\
			a_{i_{m+1}1} & a_{i_{m+1}2} & \cdots & a_{i_{m+1}(m-1)} & a_{i_{m+1}m} & y_{i_{m+1}}\\
		\end{pmatrix}
		=0}
	Then the associated discriminantal arrangement of hyperplanes passing through the origin in $\mbb{R}^n$ 
	is given by
	\equ{\mcl{C}^n_{\binom{n}{m+1}}=\{M_{\{i_1,i_2,\ldots,i_{m+1}\}}\mid 1\leq 
		i_1<i_2<\ldots<i_{m+1}\leq n\}.}
	It is a central arrangement consisting of hyperspaces, that is, linear subspaces of codimension one in $\mbb{R}^n$. 
\end{defn}

\begin{note}
	Even though the definition of hyperplanes $M_{\{i_1,i_2,\ldots,i_{m+1}\}}$ of the discriminantal arrangement $\mcl{C}^n_{\binom{n}{m+1}}$ involves the coefficients $[a_{ij}]_{1\leq j\leq m,1\leq i\leq n}$ of the variables $x_i,1\leq i\leq m$ 
	we can pick and fix any one set of equations for the hyperplanes $H_i, 1\leq i \leq n$ of the hyperplane arrangement to associate the discriminantal arrangement.
\end{note}
\begin{note}
	In general, the normal lines of these hyperplanes $M_{\{i_1,i_2,\ldots,i_{m+1}\}}$ of the discriminantal arrangement $\mcl{C}^n_{\binom{n}{m+1}}$ need not form a normal system. However they will be distinct, as they correspond to different subsets of $\{1,2,\ldots,n\}$ of cardinality $m+1$. In the equation of any hyperplane $M_{\{i_1,i_2,\ldots,i_{m+1}\}}$ of the discriminantal arrangement $\mcl{C}^n_{\binom{n}{m+1}}$,
	there are $(m+1)$ non-zero coefficients and the rest are zero coefficients.
\end{note}
\begin{note}[Convention: Fixing the coefficient matrix of any hyperplane arrangement for a fixed given normal system]
	\label{note:Convention}
	Let $\mcl{N}=\{L_1,L_2,\ldots,L_n\}$ be a normal system in $\mbb{R}^m$. Let $\mcl{U}=\{\pm (a_{i1},a_{i2}$,  $\ldots,a_{im})\mid (a_{i1},a_{i2},\ldots,a_{im})\in L_i, 1\leq i\leq n\}$ 
	be a set of antipodal pairs of vectors of the normal system $\mcl{N}$. We fix the  matrix $[a_{ij}]_{1\leq i\leq n,1\leq j\leq m}\in M_{n\times m}(\mbb{R})$. 
	Let $\mcl{H}_n^m=\{H_1,H_2,\ldots,H_n\}$ be any hyperplane arrangement with the normal system $\mcl{N}$. When we write equations for the hyperplane $H_i$, we use the fixed matrix and write 
	\equ{H_i:\us{j=1}{\os{m}{\sum}} a_{ij}x_j=b_i \text{ for some }b_i\in \mbb{R}.}
	Using this coefficient matrix, we define the discriminantal arrangement which depends only on the normal system. Two hyperplane arrangements with the same normal system give two points
	$(b_1,b_2,\ldots,b_n),(c_1,c_2,\ldots,c_n)$. If these vectors lie in the interior of the same cone of the discriminantal arrangement, then the hyperplane arrangements are generic, and they are isomorphic by an isomorphism which is 
	trivial on subscripts. In general, if the arrangements are isomorphic by such an isomorphism we say $(b_1,b_2,\ldots,b_n)$ is isomorphic to $(c_1,c_2,\ldots,c_n)$. For example, $(b_1,b_2,\ldots,b_n)$
	is isomorphic to $-(b_1,b_2,\ldots,b_n)$ even though they lie in opposite cones.
\end{note}
\begin{note}
	We note that regions of the discriminantal arrangement 
	$\mcl{C}^n_{\binom{n}{m+1}}$ are all convex conical, unbounded and there are 
	at most $\us{i=0}{\os{n}{\sum}}\binom{\binom{n}{m+1}}{i}-\binom{\binom{n}{m+1}-1}{n}$
	such regions using Theorem~\ref{theorem:NumberofRegions}.
\end{note}
We prove a certain property of a general discriminantal arrangement arising from a normal system in the lemma below.
With the notations of Definition~\ref{defn:CA} and convention in Note~\ref{note:Convention}, if we fix any $m$ variables (say) $y_1,y_2,\ldots,y_m$ out of $n$ variables $y_i,1\leq i\leq n$ in the discriminantal arrangement $\mcl{C}^n_{\binom{n}{m+1}}$ and give them real values $c_1,c_2,\ldots,c_m$, 
then we can solve for the variable $y_i$ as the value $c_i$ for $m+1\leq i\leq n$ in terms of values $c_1,c_2,\ldots,c_m$. Then the solution we get $(c_1,c_2,\ldots,c_n)$ automatically satisfies all the $\binom{n}{m+1}$ equations. 
We state this as a lemma below.
\begin{lemma}[Lemma on the Geometry of Concurrencies]
	\label{lemma:Intersection}
	~\\
	With the notations of Definition~\ref{defn:CA} and convention in Note~\ref{note:Convention}, let $\mcl{C}^n_{\binom{n}{m+1}}$ be the discriminantal arrangement. Let $1\leq i_1<i_2<\ldots<i_m\leq n$.
	The solutions for $y_{j},\  j\neq i_k,1\leq k\leq m$ in terms of $y_{i_1},y_{i_2},\ldots,y_{i_m}$ satisfies all 
	the equations of the discriminantal arrangement.
\end{lemma}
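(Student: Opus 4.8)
The plan is to recognise the lemma as a statement about a single $m$-dimensional linear subspace of $\mbb R^n$. Fix the coefficient matrix $A=[a_{ij}]\in M_{n\times m}(\mbb R)$ as in Note~\ref{note:Convention} and set
\[
W:=\left\{\ \left(\textstyle\sum_{j=1}^m a_{1j}p_j,\ \ldots,\ \sum_{j=1}^m a_{nj}p_j\right)\ :\ p=(p_1,\ldots,p_m)\in\mbb R^m\ \right\}\subseteq\mbb R^n ,
\]
the ``concurrency locus'': geometrically, the set of constant-term vectors $(b_1,\ldots,b_n)$ for which all the hyperplanes $H_l:\sum_j a_{lj}x_j=b_l$ of an arrangement with normal system $\mcl N$ pass through one common point $p$. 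The first point to establish is that $W$ lies on every hyperplane $M_{\{j_1,\ldots,j_{m+1}\}}$ of Definition~\ref{defn:CA}: if $(y_1,\ldots,y_n)\in W$ is given by the vector $p$, then in the defining determinant of $M_{\{j_1,\ldots,j_{m+1}\}}$ the last (the $y$-)column equals $\sum_{j=1}^m p_j\cdot(\text{$j$-th column})$, so the determinant vanishes. (In fact $W$ is exactly the common intersection $\bigcap M_{\{j_1,\ldots,j_{m+1}\}}$, but only this inclusion is needed.)

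The second point is that, for the $m$ indices $1\le i_1<\ldots<i_m\le n$ of the statement, the coordinate projection $\pi\colon\mbb R^n\to\mbb R^m$, $(y_1,\ldots,y_n)\mapsto(y_{i_1},\ldots,y_{i_m})$, restricts to a linear isomorphism $W\xrightarrow{\ \sim\ }\mbb R^m$. Indeed $\pi|_W$ is the map $p\mapsto Bp$ with $B:=[a_{i_kj}]_{1\le k,j\le m}$, and $B$ is invertible precisely because $\mcl N$ is a normal system --- this is Definition~\ref{defn:NS} applied to the rows $i_1,\ldots,i_m$.

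Granting these, I would finish as follows. Given arbitrary values $y_{i_k}=c_{i_k}$ $(1\le k\le m)$, put $p:=B^{-1}(c_{i_1},\ldots,c_{i_m})^{T}$ and let $(c_1,\ldots,c_n):=(\sum_j a_{1j}p_j,\ldots,\sum_j a_{nj}p_j)\in W$ be the corresponding point, so $c_l=\sum_j a_{lj}p_j$ for every $l$ and, by the first point, $(c_1,\ldots,c_n)$ lies on all $\binom{n}{m+1}$ hyperplanes of $\mcl{C}^n_{\binom{n}{m+1}}$. It only remains to check that this $(c_1,\ldots,c_n)$ is the point returned by the ``solving'' in the lemma, i.e.\ that for each $i\notin\{i_1,\ldots,i_m\}$ the value $c_i$ is the (unique) solution of $M_{\{i_1,\ldots,i_m,i\}}=0$ in $y_i$ once $y_{i_k}=c_{i_k}$ are substituted. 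Uniqueness holds because expanding that determinant along the $y$-column shows the equation is linear in $y_i$ with leading coefficient $\pm\Det B$, which is nonzero; and $y_i=c_i$ is a solution by the first point applied to the $(m+1)$-subset $\{i_1,\ldots,i_m,i\}$ and the vector $p$.

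The only step using more than cofactor expansion and the principle ``a matrix with a column in the span of the others is singular'' is the invertibility of the $m\times m$ minor $B$, which is invoked twice: to solve for each $y_i$, and to identify $W$ with $\mbb R^m$. This is precisely the generic, normal-system hypothesis of Definition~\ref{defn:NS}, and I expect it to be the only real content; the remainder is bookkeeping.
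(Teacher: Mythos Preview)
Your proof is correct and follows essentially the same idea as the paper: the vector $(c_1,\ldots,c_n)$ obtained by solving is exactly the constant-term vector for which all hyperplanes $H_l^{c_l}$ pass through a single point $p$, so every $(m+1)$-subset is concurrent and every determinant vanishes. The paper phrases this geometrically (``the hyperplanes $H^{c_{i_1}}_{i_1},\ldots,H^{c_{i_m}}_{i_m},H^{c_j}_j$ concur for every $j$, hence any $m+1$ of them concur''), while you make the same point more explicitly by parametrising the concurrency locus as $W=\{Ap:p\in\mbb R^m\}$ and invoking the column-dependence criterion for a vanishing determinant; the invertibility of $B$ that you highlight is precisely what the paper uses implicitly when it intersects $m$ of the $H$'s to get a single point.
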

\begin{proof}
	If $(y_1,y_2,\ldots,y_n)=(c_1,c_2,\ldots,c_n)$ is a solution to the subset of the equations where 
	$y_j=c_j,\  j\neq i_k,1\leq k\leq m$ is expressible in terms of the variables $y_{i_k}=c_{i_k},\ 1 \leq k\leq m$, then by 
	the very definition of the discriminantal arrangement we obtain that the hyperplanes
	\equ{H^{c_{i_1}}_{i_1},H^{c_{i_2}}_{i_2},\ldots,H^{c_{i_m}}_{i_m},H^{c_j}_j}
	concur for every $j\neq i_k,1\leq k\leq m$. Here the equation for $H_i^c,1\leq i\leq n$ is given by $\us{j=1}{\os{m}{\sum}}a_{ij}x_j=c$.
	Hence any set of $(m+1)$ hyperplanes 
	\equ{H^{c_{j_1}}_{j_1},H^{c_{j_2}}_{j_2},\ldots,H^{c_{j_m}}_{j_m},H^{c_{j_{m+1}}}_{j_{m+1}}}
	concur. This proves the lemma.
\end{proof}
\begin{note}
	\label{note:DimInt}
	Using Lemma~\ref{lemma:Intersection} we conclude that the dimension of the intersection of all the hyperplanes of the discriminantal arrangement $\mcl{C}^n_{\binom{n}{m+1}}$ is at least $m$. Also refer to Theorem $4$ in H.~H.~Crapo~\cite{MR0766269}.
\end{note}
Now we state the main theorem of this article.
\begin{thmA}[Main Theorem]
	\namedlabel{theorem:CountingInvariance}{A}
	~\\
	Let $n>m>1$ be two positive integers.
	Let $\mcl{N}=\{L_1,L_2,\ldots,L_n\}$ be a normal system of cardinality $n$ in $\mbb{R}^m$.
	Then 
	\begin{itemize}
		\item there is a bijection between the isomorphism classes of generic hyperplane arrangements $\mcl{H}_n^m$ with normal system $\mcl{N}$ 
		under isomorphisms which are trivial on subscripts, and the antipodal pairs of convex cones in the associated discriminantal arrangement $\mcl{C}^n_{\binom{n}{m+1}}$.
		\item Consequently, the number of such isomorphism classes of generic hyperplane arrangements $\mcl{H}_n^m$ is exactly equal to half of the number of convex cones in the 
		associated discriminantal arrangement $\mcl{C}^n_{\binom{n}{m+1}}$.
	\end{itemize}
\end{thmA}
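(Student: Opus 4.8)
The plan is to fix once and for all the coefficient matrix $[a_{ij}]$ of Note~\ref{note:Convention}, so that a hyperplane arrangement $\mcl{H}_n^m$ with normal system $\mcl{N}$ is recorded by the single point $b=(b_1,\dots,b_n)\in\mbb{R}^n$ of its right-hand sides, and so that $\mbb{R}^n$ is simultaneously the ambient space of the discriminantal arrangement $\mcl{C}^n_{\binom{n}{m+1}}$. For an $(m+1)$-subset $S=\{i_1<\dots<i_{m+1}\}$ write $D_S(b)$ for the determinant defining $M_S$ in Definition~\ref{defn:CA} with each $y_{i_k}$ replaced by $b_{i_k}$. The first step is to observe that $\mcl{H}_n^m$ is generic precisely when $b$ lies on no hyperplane of $\mcl{C}^n_{\binom{n}{m+1}}$: Condition~1 of Definition~\ref{defn:HA} holds for every $b$ because any $m$ or fewer of the normals are linearly independent; Condition~2 reduces to the case $r=m+1$, since a common point of more than $m$ of the $H_i$ would lie on any $m+1$ of them, and $H_{i_1},\dots,H_{i_{m+1}}$ have a common point exactly when the linear system they define is consistent, i.e.\ exactly when $D_S(b)=0$. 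Thus generic arrangements with normal system $\mcl{N}$ correspond bijectively to the points lying in some chamber (an open region of $\mcl{C}^n_{\binom{n}{m+1}}$, necessarily an open convex cone), and the whole theorem reduces to the claim that two such points yield arrangements isomorphic by a subscript-trivial isomorphism if and only if they lie in one chamber or in two antipodal chambers.

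The heart of the argument is a dictionary between the combinatorial data of Definition~\ref{defn:Iso} and the signs $D_S(b)$. Fix an $(m-1)$-subset $\ell=\{i_1<\dots<i_{m-1}\}$. Since any $m-1$ of the normals are independent, the line $L_\ell=\bigcap_k H_{i_k}$ has a direction vector $w_\ell$ depending only on $\mcl{N}$; adjoining the equation $\langle w_\ell,x\rangle=0$ singles out a point of $L_\ell$ that is a homogeneous linear function of $b$, and hence for $j\notin\ell$ the coordinate $t_j(b)$ along $w_\ell$ of the vertex $L_\ell\cap H_j$ is homogeneous linear in $b$. For $j\ne j'$ outside $\ell$ the vertices $L_\ell\cap H_j$ and $L_\ell\cap H_{j'}$ coincide exactly when $H_{i_1},\dots,H_{i_{m-1}},H_j,H_{j'}$ concur, that is exactly when $b\in M_{\ell\cup\{j,j'\}}$; as $t_j(b)-t_{j'}(b)$ is homogeneous linear and vanishes precisely on this hyperplane, $t_j(b)-t_{j'}(b)=\gamma_{\ell,j,j'}\,D_{\ell\cup\{j,j'\}}(b)$ for a nonzero constant $\gamma_{\ell,j,j'}$ independent of $b$. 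Consequently the linear order of the vertices along $w_\ell$ on $L_\ell$ is determined by, and determines, the tuple $\bigl(\mrm{sign}\,D_{\ell\cup\{j,j'\}}(b)\bigr)_{j<j'}$ through the fixed signs $\mrm{sign}\,\gamma_{\ell,j,j'}$.

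With the dictionary in hand I would prove the two directions. If $b,c$ lie in the same chamber then $\mrm{sign}\,D_S(b)=\mrm{sign}\,D_S(c)$ for all $S$, so the vertex order on every $L_\ell$ is the same for $b$ and $c$ and the two arrangements are isomorphic via the identity on subscripts; and if $c=-b$ (hence, more generally, if $c$ lies in the chamber antipodal to that of $b$) then $x\mapsto-x$ carries the $b$-arrangement onto the $c$-arrangement while respecting subscripts, so is a subscript-trivial isomorphism. Conversely, suppose a subscript-trivial isomorphism carries the $b$-arrangement onto the $c$-arrangement. By Definition~\ref{defn:Iso}, for each $\ell$ the vertex order on $L_\ell$ for $b$ agrees with that for $c$ up to reversal, so by the dictionary $\mrm{sign}\,D_{\ell\cup\{j,j'\}}(c)=\delta_\ell\,\mrm{sign}\,D_{\ell\cup\{j,j'\}}(b)$ for all $j<j'$, with a single sign $\delta_\ell\in\{\pm1\}$ that a priori depends on $\ell$. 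When two $(m-1)$-subsets differ in exactly one index there is, because $n>m$, an $(m+1)$-subset $S$ containing their union, and comparing the two expressions for $\mrm{sign}\,D_S(c)$ forces the two corresponding $\delta$'s to agree; since the graph on $(m-1)$-subsets of $\{1,\dots,n\}$ with this adjacency is connected, all $\delta_\ell$ equal a common value $\delta$, and as every $(m+1)$-subset contains an $(m-1)$-subset we get $\mrm{sign}\,D_S(c)=\delta\,\mrm{sign}\,D_S(b)$ for every $S$. If $\delta=1$ then $b$ and $c$ have the same sign vector and, the segment joining them meeting no hyperplane, lie in the same chamber; if $\delta=-1$ then, since $D_S(-b)=-D_S(b)$, the points $c$ and $-b$ have the same sign vector, so $c$ lies in the chamber antipodal to that of $b$. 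This yields the asserted bijection between isomorphism classes and antipodal pairs of chambers. Finally, because every hyperplane of $\mcl{C}^n_{\binom{n}{m+1}}$ passes through the origin, no chamber $C$ can satisfy $-C=C$ (otherwise $0=\tfrac12 v+\tfrac12(-v)\in C$ for $v\in C$, impossible), so the chambers split evenly into antipodal pairs and the number of isomorphism classes is exactly half the number of chambers. The step I expect to require the most care is the connectivity argument welding the per-line sign ambiguities $\delta_\ell$ into one global sign, together with the bookkeeping needed to make the orientation conventions on the lines $L_\ell$ uniform in $b$.
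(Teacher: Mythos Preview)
Your argument is correct and shares the paper's core idea: record an arrangement by the sign vector $\bigl(\mrm{sign}\,D_S(b)\bigr)_S$ and show that a subscript-trivial isomorphism forces this vector to be preserved or globally negated. The paper organises the argument differently, though. In place of your explicit parametrisation $t_j(b)-t_{j'}(b)=\gamma_{\ell,j,j'}\,D_{\ell\cup\{j,j'\}}(b)$, it first isolates a wall-crossing lemma (Theorem~\ref{theorem:CA}): pushing $b$ across a single facet $M_S$ swaps exactly the two relevant vertices on each of the $\binom{m+1}{2}$ lines $L_\ell$ with $\ell\subset S$ and leaves every other line untouched. Theorem~\ref{theorem:AntipodeCones} then follows a path of adjacent cones from $C$ to $D$ and asserts that, since the endpoint arrangements are isomorphic, ``effectively the points on all lines undergo swapping or effectively no swapping occurs,'' whence the sign vectors agree or are opposite. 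Your Johnson-graph connectivity argument on $(m-1)$-subsets is precisely what makes that all-or-nothing assertion rigorous, and it is the step the paper leaves implicit; so your version supplies detail exactly where you anticipated it would be needed. Conversely, the paper's wall-crossing viewpoint avoids setting up the base point on $L_\ell$, the coordinates $t_j(b)$, and the constants $\gamma_{\ell,j,j'}$, at the price of that looser final step. Your additional observations---that genericity of $\mcl{H}_n^m$ is equivalent to $b$ avoiding every $M_S$, and that no chamber is self-antipodal---are stated or taken for granted in the paper (Note~\ref{note:Convention}) rather than proved inside the main argument.
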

\begin{remark}
For $m=2$ and $n=4,5$ the main theorem is discussed in detail in example Section~\ref{sec:Examples}. For $n=6,m=2,3$, it is discussed in detail in example Section~\ref{sec:NonConcurrencyFree}. These two sections help understand the statement of the main theorem better.
\end{remark}
\begin{remark}
\label{Remark:Methodology}
We first enumerate the isomorphism classes of generic hyperplane arrangements 
arising from a fixed normal system (refer to Definitions~[\ref{defn:NS},\linebreak~\ref{defn:HAGivenbyNS}, ~\ref{defn:NSAHA}]), under isomorphisms which preserve subscripts (refer to Definition~\ref{defn:Iso}).
This is done by computing the characteristic polynomial of the associated discriminantal arrangement (refer to Definition~\ref{defn:CA}, Note~\ref{note:Convention}) 
arising from a normal system. When the normal system is concurrency free (refer to Definition~\ref{defn:ConcurrencyFree}), the enumerated value is actually independent of the choice of such a normal system that we begin with and depends entirely on the combinatorics -- more precisely on the intersection lattice of the discriminantal arrangement.
This concurrency free restriction on the normal system is a mild restriction, and this condition is quite generic (refer to Theorem~\ref{theorem:generic} and Note~\ref{note:mildrestriction}).
\end{remark}

The method of computing characteristic polynomial for hyperplane arrangements is a well established method. 
Articles by T.~Zaslavky~\cite{MR0400066},~\cite{MR0357135}, F.~Ardila~\cite{MR2318445}, E.~Katz~\cite{MR3702317}, and books by 
A.~Dimca ~\cite{MR3618796}, P.~Orlik \& H.~Terao~\cite{MR1217488}, R.~Stanley~\cite{MR2868112} are relevant in which this concept is explained.

\section{\bf{Combinatorics of the discriminantal arrangement}}
\label{sec:CCA}
The combinatorics of the discriminantal arrangement -- more precisely the intersection lattice of the discriminantal arrangement -- has been studied in this section. The intersection lattice for a certain class of ``very generic or sufficiently general" (Definition $2.1$ in~\cite{MR1720104}) discriminantal arrangements which maximises the $f$-vector of the intersection lattice has been already characterised by C.~A.~Athanasiadis (refer to Theorem $2.3$ in~\cite{MR1720104}). An important ingredient in its proof is the Crapo's characterisation of the matroid $M(n,m)$ of circuits of the configuration of $n$-generic points in $\mbb{R}^m$. This matroid is introduced in H.~H.~Crapo~\cite{MR0766269} and characterised in H.~H.~Crapo~\cite{MR0843374}, Chapter 6, when the coordinates of the $n$-points are generic indeterminates, as the Dilworth completion $D_m(B_n)$ of the $m^{th}$-lower truncation of the Boolean algebra of rank $n$ (see H.~H.~Crapo and G.~C.~Rota~\cite{MR0290980}, Chapter 7). The intersection lattice of ``very generic" discriminantal arrangements coincides with the lattice $L(n,m)$ of flats of $M(n,m)$. In C.~A.~Athanasiadis~\cite{MR1720104}, it is proved that this lattice is isomorphic to the lattice $P(n,m)$ (refer to Theorem $3.2$ in~\cite{MR1720104}).  $P(n,m)$ is the collection of all sets of the form $\mcl{S}=\{S_1,S_2,\ldots,S_r\}$, where $S_i\subseteq\{1,2,\ldots,n\}$, each of cardinality at least $m+1$, such that 
\equ{\mid \us{i\in I}{\bigcup}S_i \mid>m+\us{i\in I}{\sum}(\mid S_i\mid-m)}
for all $I\subseteq \{1,2,\ldots,r\}$ with $\mid I\mid \geq 2$.  They partially order $P(n,m)$ by letting $\{S_1,S_2,\ldots,S_r\}=\mcl{S}\leq \mcl{T}=\{T_1,T_2,\ldots,T_p\}$, if, for each $1\leq i\leq r$ there exists $1\leq j\leq p$ such that $S_i\subseteq T_j$.  This isomorphism between the lattices $L(n,m)$ and $P(n,m)$ was initially conjectured (refer to Definition $4.2$ and Conjecture $4.3$) in M.~Bayer and K.~Brandt~\cite{MR1456579}. 

\begin{remark}
A normal system $\mcl{N}$ is said to be ``very generic", if it gives a ``very generic or sufficiently general" discriminantal arrangement in sense of \linebreak C.~A.~Athanasiadis (Definition $2.1$ in~\cite{MR1720104}) or ``very generic" discriminantal arrangement in the sense of M.~Bayer and K.~Brandt (Definition $4.2$  in~\cite{MR1456579}). Let $\mcl{N}=\{L_i=\{t(a_{i1},a_{i2},\ldots,a_{im})$ $\in \mbb{R}^m\mid t\in \R\}\mid 1\leq i\leq n\}$ be a fixed normal system which gives a ``very generic" discriminantal arrangement $\mcl{C}^n_{\binom{n}{m+1}}$ with coefficient matrix $[a_{ij}]_{1\leq i\leq n,1\leq i\leq m}$. Given an element $\mcl{S}=\{S_1,S_2,\ldots,S_r\}\in P(n,m)$, these sets $S_i,1\leq i\leq r$ can be thought of as sets of concurrencies of hyperplanes of an arrangement in $\mbb{R}^m$ given by the ``very generic" normal system $\mcl{N}$ using Theorem $3.2$ in~\cite{MR1720104}.  This idea helps understand this section on combinatorics better.  Also refer to Definition~\ref{defn:BaseConstruction} for an explanation of this idea.
\end{remark}

Though the main Theorem~\ref{theorem:CountingInvariance} is stated for any discriminantal arrangement, we show that the number of convex cones formed in a discriminantal arrangement $\mcl{C}^n_{\binom{n}{m+1}}$ does not depend on the normal system $\mcl{N}$, and is a combinatorial invariant if the normal system $\mcl{N}$ is concurrency free (refer to Definition~\ref{defn:ConcurrencyFree}). Actually we explore the relationship between an element in the intersection lattice and its combinatorial description to describe its rank precisely when the arrangement is concurrency free which later turns out to be a generic condition. This section gives a better geometric description of the rank similar to Corollary $3.6$ in~\cite{MR1720104}. We will revisit this Corollary $3.6$ in~\cite{MR1720104} once again in Definition~\ref{defn:BaseConstruction}.
We begin with a few combinatorial definitions after motivating the definitions with the following note.

\begin{note}
	With the notations of Definition~\ref{defn:CA} and convention in Note~\ref{note:Convention}, let $\mcl{D}$ be any collection of subsets of $\{1,2,\ldots,n\}$ each of size $m+1$. 
	We remark that the dimension of the intersection 
	\equ{\dim\bigg(\us{\{i_1,i_2,\ldots,i_m,i_{m+1}\} \in 
			\mcl{D}}{\bigcap} M_{\{i_1<i_2<\ldots<i_m<i_{m+1}\}}\bigg)}
	could possibly depend on the combinatorics of the sub-collection $\mcl{D}$ and not on the coefficients of the variables in the equations defining the hyperplanes 
	\equ{M_{\{i_1<i_2<\ldots<i_m<i_{m+1}\}}\text{ with  }\{i_1,i_2,\ldots,i_m,i_{m+1}\}\in \mcl{D}}
	if the normal system and the coefficients are of a certain type which is given in Definition~\ref{defn:ConcurrencyFree}.
	We prove in Theorem~\ref{theorem:DimIntersection} that this dimension is a combinatorial invariant only depending on the collection $\mcl{D}$ for normal systems which are
	concurrency free.
\end{note}
This note motivates the following definition.
\begin{defn}[Concurrency Closed Sub-collection and Concurrency Closure]
	\label{defn:CC}
	~\\
	Let $n>m$ be two positive integers. Let \equ{\mcl{E}=\{\{i_1,i_2,\ldots,i_{m+1}\}\mid 1\leq i_1<i_2<\ldots<i_m<i_{m+1}\leq n\}} be the collection of all subsets of 
	cardinality $m+1$. Let $\mcl{D}\subs \mcl{E}$ be any arbitrary collection. 
	
	We say $\mcl{D}$ is concurrency closed if the following criterion for any element $S\in \mcl{E}$ is satisfied with respect to $\mcl{D}$. Suppose there exists $\{S_1,S_2,\ldots,S_r\}\subseteq \mcl{D}$ and $S_i\neq S,1\leq i\leq r$ such that for every $J\subseteq \{1,2,\ldots,r\}$ we have $\mid \us{j\in J}{\cup}S_j\mid \geq m+\mid J\mid$ and $\mid \us{i=1}{\os{r}{\cup}}S_j\cup S\mid < m+r+1$ then $S\in \mcl{D}$. This definition is motivated by the notion of independence in the Dilworth matroid $D_m(B_n)$.
	
	We observe that the collection $\mcl{E}$ is concurrency closed.
    Now let $\mcl{D}\subs \mcl{E}$ be any arbitrary collection. Construct the concurrency closure $\overline{\mcl{D}}$ of $\mcl{D}$ as follows. 
	First set $\mcl{D}_0=\mcl{D}$ and add those elements $S\in \mcl{E}$ to $\mcl{D}_0$ if these $S$ satisfy the criterion mentioned above, to obtain $\mcl{D}_1$. Now construct $\mcl{D}_2$ from $\mcl{D}_1$ similarly and so on. 
	We have \equ{\mcl{D}_0=\mcl{D}\subsetneq \mcl{D}_1 \subsetneq \mcl{D}_2\subsetneq \ldots \subsetneq \mcl{D}_n=\overline{\mcl{D}}.}
	Since $\mcl{E}$ is a finite set we obtain $\overline{\mcl{D}}$ from $\mcl{D}_0$ in finitely many steps. Actually it can be shown that $\mcl{D}_1$ itself is concurrency closed and $\mcl{D}_1=\overline{\mcl{D}}$.
\end{defn}
\begin{defn}[Base Collection]
	\label{defn:BC}
	~\\
	Let $n>m$ be two positive integers. Let \equ{\mcl{E}=\{\{i_1,i_2,\ldots,i_{m+1}\}\mid 1\leq i_1<i_2<\ldots<i_m<i_{m+1}\leq n\}} be the collection of all subsets of 
	cardinality $m+1$. Let $\mcl{D}\subs \mcl{E}$ be any arbitrary collection. 
	We say $\ti{\mcl{D}}$ is a base collection for $\mcl{D}$ if $\overline{\ti{\mcl{D}}}=\overline{\mcl{D}}$ and $\ti{\mcl{D}}$ is minimal, that is, if 
	$\mcl{D}'$ is any other collection such that $\overline{\mcl{D}'}=\overline{\mcl{D}}$ and $\mcl{D}'\subseteq \ti{\mcl{D}}$ then we have $\ti{\mcl{D}}=\mcl{D}'$.  We can actually show that all minimal bases have equal cardinality.
\end{defn}
\begin{defn}[Construction of a Base for a Concurrency Closed Collection]
	\label{defn:BaseConstruction}
	Let $n>m$ be two positive integers. Let \equ{\mcl{E}=\{\{i_1,i_2,\ldots,i_{m+1}\}\mid 1\leq i_1<i_2<\ldots<i_m<i_{m+1}\leq n\}} be the collection of all subsets of 
	cardinality $m+1$. Let $\mcl{D}\subs \mcl{E}$ be a concurrency closed subcollection. We say there is a concurrency of order $k\geq m+1$ in $\mcl{D}$, if there exists a concurrency set $D\subs \{1,2,\ldots,n\}$
	of size $k$ such that all $\binom{k}{m+1}$ subsets of $D$ of size $m+1$ are in the collection $\mcl{D}$. Moreover $D$ should be maximal with respect to this property, that is, there does not exist a set $E \supsetneq D$
	of size more than $k$ such that all $\binom{\mid E \mid}{m+1}$ subsets of size $m+1$ are in the collection $\mcl{D}$.
	Let $k_1,k_2,\ldots,k_r$ be the orders of concurrencies that exist in $\mcl{D}$ with $k_i\geq m+1, 1\leq i\leq r$. Then the cardinality of a base collection $\mcl{D}'$ for $\mcl{D}$ is given by 
	\equ{(k_1-m)+(k_2-m)+\ldots+(k_r-m)=\bigg(\us{i=1}{\os{r}{\sum}}k_i\bigg)-rm.}
	Also see Corollary $3.6$ in C.~A.~Athanasiadis~\cite{MR1720104}.  Let $D_i\subs \{1,2,\ldots,n\}$ be the concurrency set of size $k_i$ which gives rise to the order $k_i$ concurrency in the concurrency closed subcollection $\mcl{D}$. Let $\mcl{S}=\{D_1,D_2,\ldots,D_r\}$. If the normal system is concurrency free (Definition~\ref{defn:ConcurrencyFree}) then we have that 
	\equa{\#\big(\mcl{D}'&=Base\ of\ (\mcl{D})\big) = n-\dim\bigg(\us{\{i_1,i_2,\ldots,i_m,i_{m+1}\} \in \mcl{D}}{\bigcap} M_{\{i_1<i_2<\ldots<i_m<i_{m+1}\}}\bigg)\\&=\us{D\in \mcl{S}}{\sum}\gn(D)} 
	in the notation of Corollary 3.6 in~\cite{MR1720104} where $\gn(D)=\max(0,\mid D\mid-\ m)$ for $D\subs\{1,2,\ldots,n\}$. We also can show in this case that $\mcl{S}\in P(n,m)$. Here we do something more. We actually construct a base collection $\mcl{D}'$ for $\mcl{D}$.
	Let the concurrency sets be given by 
	\equ{D_i=\{j^i_1<j^i_2<\ldots<j^i_{k_i}\}, 1\leq i\leq r.}
	Then a base collection $\mcl{D}'$ for $\mcl{D}$ is given by 
	\equ{\{\{j^i_1,j^i_2,\ldots,j^i_m,j^i_l\}\mid m+1 \leq l \leq k_i,1\leq i\leq r\}.}
	This collection $\mcl{D}'$ has the required cardinality. We denote \equ{rank(\mcl{D})=\us{D\in \mcl{S}}{\sum}\gn(D) \ \ (\text{is the cardinality of any base collection of } \mcl{D}).}
\end{defn}
Now we mention a note on line arrangements which motivates the definition of a normal system being concurrency free.

\begin{note}
	\label{note:Nonconcurrencyfree}
	Let $L_1,L_2,L_3,L_4,L_5,L_6$ be six lines in the plane $\mbb{R}^2$ with no two of them parallel and such that \equ{L_1\perp L_4,L_2\perp L_5,L_3\perp L_6.} Suppose the sets  
	\equ{\{L_2,L_3,L_4\},\{L_1,L_2,L_6\},\{L_1,L_3,L_5\}} of lines are concurrent. Then the fourth set $\{L_4,L_5,L_6\}$ of lines is 
	also concurrent as the altitudes in the triangle $\Gd L_1L_2L_3$ must be concurrent. Here we have $n=6,m=2, \binom{n}{m+1}=\binom{6}{3}=20$. 
	Let 
	\equa{&\mcl{C}^6_{20}=\\
		&\{M_{\{1,2,3\}},M_{\{1,2,4\}},M_{\{1,2,5\}},M_{\{1,2,6\}},M_{\{1,3,4\}},M_{\{1,3,5\}},M_{\{1,3,6\}},M_{\{1,4,5\}},\\
		&M_{\{1,4,6\}},M_{\{1,5,6\}},M_{\{2,3,4\}},M_{\{2,3,5\}},M_{\{2,3,6\}},M_{\{2,4,5\}},M_{\{2,4,6\}},M_{\{2,5,6\}},\\
		&M_{\{3,4,5\}},M_{\{3,4,6\}},M_{\{3,5,6\}},M_{\{4,5,6\}}\}}
	be the associated discriminantal arrangement. Let $\mcl{E}$ be the collection all subsets of $\{1,2,3,4,5,6\}$ of cardinality three.
	Then we observe that the sub-collections of $\mcl{E}$ given by
	\equ{\mcl{D}_1=\{\{1,2,6\},\{1,3,5\},\{2,3,4\}\}\text{ and }\mcl{D}_2=\mcl{D}_1\cup \{\{4,5,6\}\}} 
	are concurrency closed, that is, $\overline{\mcl{D}_1}=\mcl{D}_1$ and $\overline{\mcl{D}_2}=\mcl{E}$. The concurrency orders that exist in $\overline{\mcl{D}_1}$ are $3,3,3$ and in $\overline{\mcl{D}_2}$ are $6$. A base collection
	for $\overline{\mcl{D}_1}$ is $\mcl{D}_1$ and a base collection for $\overline{\mcl{D}_2}$ is $\mcl{D}_2$.
	However we also observe that because of perpendicularity of the pairs of lines $L_1\perp L_4,L_2\perp L_5,L_3\perp L_6$ the following spaces
	\equ{\us{\{i_1<i_2<i_3\}\in \mcl{D}_1}{\bigcap}M_{\{i_1<i_2<i_3\}}=\us{\{i_1<i_2<i_3\}\in \mcl{D}_2}{\bigcap}M_{\{i_1<i_2<i_3\}}}
	are equal because altitudes of $\Gd L_1L_2L_3$ must be concurrent. Hence the dimensions are equal but the cardinality of their base collections are different.  Moreover we have that 
	\equ{\us{\{i_1<i_2<i_3\}\in \mcl{D}_2}{\bigcap}M_{\{i_1<i_2<i_3\}}\neq \us{\{i_1<i_2<i_3\}\in \overline{\mcl{D}_2}}{\bigcap}M_{\{i_1<i_2<i_3\}}}
\end{note}
This motivates the following Definition~\ref{defn:ConcurrencyFree} in which we define when a normal system is concurrency free.
\begin{defn}[Concurrency Free Normal System]
	\label{defn:ConcurrencyFree}
	~\\
	Let $n>m>1$ be two positive integers.
	Let $\mcl{N}=\{L_1,L_2,\ldots,L_n\}$ be a normal system of cardinality $n$ in $\mbb{R}^m$. Let \equ{\mcl{U}=\{\pm (a_{i1},a_{i2},\ldots,a_{im})\mid (a_{i1},a_{i2},\ldots,a_{im})\in L_i, 1\leq i\leq n\}} 
	be a set of antipodal pairs of vectors of the normal system $\mcl{N}$. We fix the coefficient matrix $[a_{ij}]_{1\leq i\leq n,1\leq j\leq m}\in M_{n\times m}(\mbb{R})$.
	Let $\mcl{C}^n_{\binom{n}{m+1}}=\{M_{\{i_1,i_2,\ldots,i_{m+1}\}}\mid 1\leq i_1<i_2<\ldots<i_{m+1}\leq n\}$ be the associated discriminantal arrangement. 
	Let $\mcl{E}$ be the collection of all subsets of $\{1,2,\ldots,n\}$ of size $m+1$. 
	We say the normal system $\mcl{N}$ is concurrency free, if for any concurrency closed collection $\mcl{D}\subs \mcl{E}$ we have
	\equa{\#\big(\mcl{D}'=Base\ of\ (\mcl{D})\big) &= n-\dim\bigg(\us{\{i_1,i_2,\ldots,i_m,i_{m+1}\} \in \mcl{D}}{\bigcap} M_{\{i_1<i_2<\ldots<i_m<i_{m+1}\}}\bigg)\\
	&=n-\dim\bigg(\us{\{i_1,i_2,\ldots,i_m,i_{m+1}\} \in \mcl{D}'}{\bigcap} M_{\{i_1<i_2<\ldots<i_m<i_{m+1}\}}\bigg)}
	
	The example in Note~\ref{note:Nonconcurrencyfree} gives a normal system consisting of cardinality six in the plane which is not concurrency free.
\end{defn}
\begin{note}[Breaking concurrency orders in succession]
	\label{note:BreakConcurrency}
	Let $\mcl{N}$ be a normal system which is concurrency free. Given a finite set of hyperplanes with normals along the lines of the normal system $\mcl{N}$, let $\mcl{D}$ be the concurrency closed
	collection generated by the higher order concurrencies. Then the point concurrencies of higher orders can be broken by translations of the hyperplanes in succession exactly $b$-times where 
	$b$ is the cardinality of a base $\mcl{D}'$ for $\mcl{D}$. This is because normals of the hyperplanes corresponding to sets in the base collection are linearly independent. For an example of breaking concurrency orders in succession see Example~\ref{example:FiveLA} and for a theoretical explanation refer to Section~\ref{sec:gennature}.
\end{note}
\begin{note}
	\label{note:ConcurrencyFree}
	That any normal system $\mcl{N}$ which consists of four or five lines in the plane $\mbb{R}^2$ is concurrency free follows from Examples~[\ref{example:FourLA},~\ref{example:FiveLA}].
	So a normal system needs to have at least six lines as an example which is not concurrency free.
\end{note}
Now we state a theorem which gives the dimension as a combinatorial invariant.
\begin{theorem}[Dimension of the Intersection]
	\label{theorem:DimIntersection}
	~\\
	Let $n>m>1$ be two positive integers.
	Let \equ{\mcl{C}^n_{\binom{n}{m+1}}=\{M_{\{i_1<i_2<\ldots<i_m<i_{m+1}\}}\mid 1\leq i_1<i_2<\ldots<i_m<i_{m+1}\leq n\}} 
	be the discriminantal arrangement given by a normal system $\mcl{N}=\{L_1,L_2,\ldots,L_n\}$ in $\mbb{R}^m$ which is concurrency free. Let 
	\equ{\mcl{E}=\{\{i_1,i_2,\ldots,i_m,i_{m+1}\}\mid 1\leq i_1<i_2<\ldots<i_m<i_{m+1}\leq n\}.}
	Let $\mcl{D}\subs \mcl{E}$ be a sub-collection of sets. Then we have   
	\equa{d_{\mcl{D}}=n-rank(\overline{\mcl{D}})&=\dim\bigg(\us{\{i_1,i_2,\ldots,i_m,i_{m+1}\} \in 
			\mcl{D}}{\bigcap} M_{\{i_1<i_2<\ldots<i_m<i_{m+1}\}}\bigg)}
	and $d_{\mcl{D}}$ is a combinatorial invariant.
\end{theorem}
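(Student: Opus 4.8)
The plan is to treat the two assertions separately.

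That $d_{\mcl{D}}=n-rank(\overline{\mcl{D}})$ is a combinatorial invariant is immediate: the concurrency closure $\overline{\mcl{D}}$ and the number $rank(\overline{\mcl{D}})$ are defined in Definitions~\ref{defn:CC},~\ref{defn:BC} and~\ref{defn:BaseConstruction} purely through cardinalities of unions of subsets of $\{1,2,\ldots,n\}$, with no reference to the normal system $\mcl{N}$ or its coefficient matrix; hence $n-rank(\overline{\mcl{D}})$ depends only on $n$, $m$ and $\mcl{D}$. So the real content is the identity $\dim(\bigcap_{S\in\mcl{D}}M_S)=n-rank(\overline{\mcl{D}})$, where for $S=\{i_1<i_2<\ldots<i_{m+1}\}$ we abbreviate $M_S=M_{\{i_1<i_2<\ldots<i_m<i_{m+1}\}}$ and let $\gn_S$ be a normal vector of the hyperspace $M_S$, so that $\dim(\bigcap_{S\in\mcl{D}}M_S)=n-\dim\mrm{span}\{\gn_S\mid S\in\mcl{D}\}$.

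\emph{Step 1: the concurrency closed case.} Let $\mcl{C}\subs\mcl{E}$ be concurrency closed and let $\mcl{C}'$ be a base collection of $\mcl{C}$; note $\mcl{C}'\sbq\overline{\mcl{C}'}=\overline{\mcl{C}}=\mcl{C}$ and $|\mcl{C}'|=rank(\mcl{C})$. Feeding $\mcl{C}$ into Definition~\ref{defn:ConcurrencyFree} gives $|\mcl{C}'|=n-\dim(\bigcap_{S\in\mcl{C}}M_S)=n-\dim(\bigcap_{S\in\mcl{C}'}M_S)$. The second equality says $\dim\mrm{span}\{\gn_S\mid S\in\mcl{C}'\}=|\mcl{C}'|$, i.e.\ the normals indexed by a base are linearly independent; the first equality, together with the inclusion $\bigcap_{S\in\mcl{C}}M_S\sbq\bigcap_{S\in\mcl{C}'}M_S$ (since $\mcl{C}'\sbq\mcl{C}$) and the equality of dimensions, forces $\bigcap_{S\in\mcl{C}}M_S=\bigcap_{S\in\mcl{C}'}M_S$. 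In particular $\dim(\bigcap_{S\in\mcl{C}}M_S)=n-rank(\mcl{C})$, which settles the theorem whenever $\mcl{D}$ is concurrency closed.

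\emph{Step 2: reduction to the closure.} It remains to prove $\bigcap_{S\in\mcl{D}}M_S=\bigcap_{S\in\overline{\mcl{D}}}M_S$ for arbitrary $\mcl{D}\sbq\mcl{E}$; as $\mcl{D}\sbq\overline{\mcl{D}}$, only the inclusion ``$\sbq$'' needs argument, and for this it suffices to show $\gn_T\in\mrm{span}\{\gn_S\mid S\in\mcl{D}\}$ for every $T\in\overline{\mcl{D}}$. Fix such a $T$ with $T\nin\mcl{D}$. Since $\overline{\mcl{D}}=\mcl{D}_1$ (Definition~\ref{defn:CC}), $T$ already satisfies the closure criterion relative to $\mcl{D}$: there are $S_1,\ldots,S_r\in\mcl{D}$, each distinct from $T$, with $|\bigcup_{j\in J}S_j|\geq m+|J|$ for all $J\sbq\{1,\ldots,r\}$ and $|\bigcup_{i=1}^r S_i\cup T|<m+r+1$. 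Put $A=\bigcup_{i=1}^r S_i$. Taking $J=\{1,\ldots,r\}$ gives $|A|\geq m+r$, and then $|A|\leq|A\cup T|\leq m+r$ forces $|A|=m+r$ and $T\sbq A$. A short check of the closure criterion shows that $\overline{\{S_1,\ldots,S_r\}}$ is precisely the collection $\mcl{C}$ of all $(m+1)$-subsets of $A$: the witnesses $S_1,\ldots,S_r$ force, in one step, every $(m+1)$-subset of $A$ (the union of such a subset with $S_1,\ldots,S_r$ equals $A$, of size $m+r<m+r+1$); conversely every stage of the closure stays within $\mcl{C}$, because a witness family lies in the preceding stage, hence inside $A$, and the independence inequality forces its union with any $S\nsbq A$ to be too large to meet the criterion. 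Thus $\mcl{C}$ is concurrency closed, $A$ is its unique maximal concurrency set, of order $m+r$, so $rank(\mcl{C})=\gn(A)=r$ (Definition~\ref{defn:BaseConstruction}); and $\{S_1,\ldots,S_r\}$ is a generating subcollection of $\mcl{C}$ of the minimum possible cardinality $r=rank(\mcl{C})$, hence a base of $\mcl{C}$. Applying Step~1 to $\mcl{C}$ with this base gives $\bigcap_{i=1}^r M_{S_i}=\bigcap_{S\in\mcl{C}}M_S$; since $T\in\mcl{C}$ we get $M_T\spq\bigcap_{i=1}^r M_{S_i}$, equivalently $\gn_T\in\mrm{span}\{\gn_{S_1},\ldots,\gn_{S_r}\}\sbq\mrm{span}\{\gn_S\mid S\in\mcl{D}\}$, as required.

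Combining the two steps, $\dim(\bigcap_{S\in\mcl{D}}M_S)=\dim(\bigcap_{S\in\overline{\mcl{D}}}M_S)=n-rank(\overline{\mcl{D}})=d_{\mcl{D}}$. The main obstacle will be the purely combinatorial bookkeeping in Step~2 — showing that $\overline{\{S_1,\ldots,S_r\}}$ is exactly the family of all $(m+1)$-subsets of $A$ and that $\{S_1,\ldots,S_r\}$ is a base of that family — since this is the point where the Dilworth-matroid flavour of the concurrency-closure operation has to be matched precisely with the linear span of the normals $\gn_S$; everything else reduces to unwinding Definition~\ref{defn:ConcurrencyFree} and elementary linear algebra on intersections of hyperspaces.
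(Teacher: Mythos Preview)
Your argument is correct and follows the same outline as the paper's proof: pass from $\mcl{D}$ to its closure $\overline{\mcl{D}}$, then to a base $\ti{\mcl{D}}$, and use the concurrency-free hypothesis to identify the dimension as $n-|\ti{\mcl{D}}|$. The one substantive difference is that the paper simply asserts the chain of equalities $\bigcap_{S\in\mcl{D}}M_S=\bigcap_{S\in\overline{\mcl{D}}}M_S=\bigcap_{S\in\ti{\mcl{D}}}M_S$ ``by definition of the concurrency closure and the base collection,'' whereas your Step~2 actually proves the first equality by locating, for each $T\in\overline{\mcl{D}}\setminus\mcl{D}$, witnesses $S_1,\ldots,S_r\in\mcl{D}$ whose union $A$ has size $m+r$ and contains $T$, and then applying the concurrency-free condition to the auxiliary closed collection of all $(m{+}1)$-subsets of $A$ to force $\gn_T\in\mrm{span}\{\gn_{S_1},\ldots,\gn_{S_r}\}$. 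This extra work is not superfluous: Note~\ref{note:Nonconcurrencyfree} exhibits a non-concurrency-free normal system for which $\bigcap_{S\in\mcl{D}_2}M_S\neq\bigcap_{S\in\overline{\mcl{D}_2}}M_S$, so the equality genuinely requires the hypothesis, and your proof makes explicit where it enters.
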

\begin{proof}
	By definition of the concurrency closure and the base collection we have 
	\equa{&\us{\{i_1,i_2,\ldots,i_m,i_{m+1}\} \in \mcl{D}}{\bigcap} M_{\{i_1<i_2<\ldots<i_m<i_{m+1}\}}=\\ 
		&\us{\{i_1,i_2,\ldots,i_m,i_{m+1}\} \in \overline{\mcl{D}}=\overline{\ti{\mcl{D}}}}{\bigcap} M_{\{i_1<i_2<\ldots<i_m<i_{m+1}\}}=\\
		&\us{\{i_1,i_2,\ldots,i_m,i_{m+1}\} \in \ti{\mcl{D}}}{\bigcap} M_{\{i_1<i_2<\ldots<i_m<i_{m+1}\}}.}
	Hence their dimensions are equal. For the base collection $\ti{\mcl{D}}$ the set of normals of the hyperplanes 
	\equ{M_{\{i_1,i_2,\ldots,i_m,i_{m+1}\}},\{i_1,i_2,\ldots,i_m,i_{m+1}\} \in \ti{\mcl{D}}}
	in $\mbb{R}^n$ are linearly independent because of minimality and the fact that the normal system $\mcl{N}$ is concurrency free. 
	This proves the theorem.
\end{proof}
We compute the characteristic polynomial of the discriminantal arrangement over $\mbb{R}$ given by a normal system over $\mbb{R}$ which is concurrency free.

\begin{theorem}
	\label{theorem:CharpolyCA}
	Let $n>m>1$ be two positive integers.
	Let \equ{\mcl{C}^n_{\binom{n}{m+1}}=\{M_{\{i_1<i_2<\ldots<i_m<i_{m+1}\}}\mid 1\leq i_1<i_2<\ldots<i_m<i_{m+1}\leq n\}} 
	be the discriminantal arrangement given by a normal system $\mcl{N}=\{L_1,L_2,\ldots,L_n\}$ which is concurrency free. 
	Let \equ{\mcl{E}=\{\{i_1,i_2,\ldots,i_m,i_{m+1}\}\mid 1\leq i_1<i_2<\ldots<i_m<i_{m+1}\leq n\}.}
	Then the characteristic polynomial is given by
	\equ{\gch(\mcl{C}^n_{\binom{n}{m+1}})(x)
		=\us{\mcl{D}\subs \mcl{E}}{\sum}(-1)^{\#(\mcl{D})}x^{d_{\mcl{D}}}}
	with $d_{\mcl{D}}=n-rank(\overline{\mcl{D}})$ where $rank(\overline{\mcl{D}})$ is the cardinality of a base collection $\ti{\mcl{D}}$ for $\overline{\mcl{D}}$.
	The number of convex regions is given by 
	\equa{r(\mcl{C}^n_{\binom{n}{m+1}})&=(-1)^n\gch(\mcl{C}^n_{\binom{n}{m+1}})(-1)=(-1)^n\us{\mcl{D}\subs \mcl{E}}{\sum}(-1)^{\#(\mcl{D})+d_{\mcl{D}}}\\
		&=\us{\mcl{D}\subs \mcl{E}}{\sum}(-1)^{n+\#(\mcl{D})+d_{\mcl{D}}}=\us{\mcl{D}\subs \mcl{E}}{\sum}(-1)^{\#(\mcl{D})+rank(\overline{\mcl{D}})}.}
	Both are independent of any normal system which is concurrency free and depends only on the cardinalities $n\geq m+1$ and is combinatorially determined.
\end{theorem}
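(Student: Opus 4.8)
The plan is to combine Whitney's inclusion--exclusion formula for the characteristic polynomial of an arrangement with the dimension count of Theorem~\ref{theorem:DimIntersection}, and then to invoke Zaslavsky's theorem for the region count.

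First recall (R.~Stanley~\cite{MR2868112}; P.~Orlik \& H.~Terao~\cite{MR1217488}) that for any arrangement $\mcl{A}$ of hyperplanes in $\mbb{R}^N$,
\[
\gch(\mcl{A})(x)=\us{\substack{\mcl{B}\sbq\mcl{A}\\ \cap\mcl{B}\neq\es}}{\sum}(-1)^{\#(\mcl{B})}\,x^{\dim(\cap\mcl{B})},
\]
the sum running over the sub-arrangements whose hyperplanes have a common point. The arrangement $\mcl{C}^n_{\binom{n}{m+1}}$ is \emph{central} in $\mbb{R}^n$ --- each hyperplane $M_{\{i_1<\ldots<i_{m+1}\}}$ passes through the origin --- so $0\in\us{S\in\mcl{D}}{\bigcap}M_S$ for every sub-collection $\mcl{D}\sbq\mcl{E}$, and the sum therefore runs over \emph{all} subsets $\mcl{D}$ of $\mcl{E}$, the empty collection contributing the leading term $x^n$.

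Now Theorem~\ref{theorem:DimIntersection} identifies the exponents: for a concurrency free normal system and every $\mcl{D}\sbq\mcl{E}$,
\[
\dim\bigg(\us{\{i_1,\ldots,i_{m+1}\}\in\mcl{D}}{\bigcap}M_{\{i_1<\ldots<i_{m+1}\}}\bigg)=n-rank(\overline{\mcl{D}})=d_{\mcl{D}},
\]
where $rank(\overline{\mcl{D}})$ is the cardinality of a base collection for $\overline{\mcl{D}}$. Substituting this exponent into Whitney's formula gives exactly the asserted expression for $\gch(\mcl{C}^n_{\binom{n}{m+1}})(x)$. For the region count, Zaslavsky's theorem (T.~Zaslavsky~\cite{MR0400066}) states that the number of (convex, conical) regions equals $(-1)^n\gch(\mcl{C}^n_{\binom{n}{m+1}})(-1)$; evaluating at $x=-1$, distributing $(-1)^n$, and using $n+d_{\mcl{D}}=2n-rank(\overline{\mcl{D}})\equiv rank(\overline{\mcl{D}})\pmod 2$ produces the displayed chain of equalities, ending with $\us{\mcl{D}\sbq\mcl{E}}{\sum}(-1)^{\#(\mcl{D})+rank(\overline{\mcl{D}})}$.

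Finally, the independence statement is immediate from Theorem~\ref{theorem:DimIntersection}: the exponent $d_{\mcl{D}}$, equivalently $rank(\overline{\mcl{D}})$, is computed purely from $n$, $m$ and the subset structure of $\mcl{E}$ through the concurrency closure and a base collection (Definitions~\ref{defn:CC},~\ref{defn:BC},~\ref{defn:BaseConstruction}), with no reference to the particular coefficient matrix realising the concurrency free normal system; hence each summand, and therefore both $\gch(\mcl{C}^n_{\binom{n}{m+1}})(x)$ and the region count, is the same for all concurrency free normal systems sharing the given $n$ and $m$. There is no substantial obstacle here: the content is carried entirely by Theorem~\ref{theorem:DimIntersection} together with the classical theorems of Whitney and Zaslavsky, and the only care required is to observe that the arrangement is central (so that Whitney's sum omits no sub-collection) and to track the parity in passing from $\gch(-1)$ to the number of regions.
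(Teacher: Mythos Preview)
Your proposal is correct and follows essentially the same route as the paper: apply Whitney's inclusion--exclusion formula (Stanley, Proposition~3.11.3) to the central arrangement, identify the exponent via Theorem~\ref{theorem:DimIntersection}, and then invoke Zaslavsky's region count. If anything, you are slightly more explicit than the paper in noting why centrality makes every $\mcl{D}\sbq\mcl{E}$ contribute and in tracking the parity $n+d_{\mcl{D}}\equiv rank(\overline{\mcl{D}})\pmod 2$.
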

\begin{proof}
	Let $\mcl{L}$ be the set of all intersections of hyperplanes in the discriminantal arrangement $\mcl{C}^n_{\binom{n}{m+1}}$.
	The characteristic polynomial is given by 
	\equ{\gch(\mcl{C}^n_{\binom{n}{m+1}})(x)=\us{\mcl{D}\subs\mcl{E}}{\sum}(-1)^{\#(\mcl{D})}x^{n-rank(\overline{\mcl{D}})},}
	(refer to R.~Stanley~\cite{MR2868112}, page $283$, Proposition $3.11.3$). Using the dimension numbers $d_{\mcl{D}}$ for elements of $\mcl{L}$ we can determine the 
	characteristic polynomial of the discriminantal arrangement as  
	\equ{\gch(\mcl{C}^n_{\binom{n}{m+1}})(x)=\us{\mcl{D}\subs \mcl{E}}{\sum}(-1)^{\#(\mcl{D})}q^{d_{\mcl{D}}}=x^n-\binom{n}{m+1}x^{n-1}+\ldots.}
	
	Hence the number of convex regions of the discriminantal arrangement $\mcl{C}^n_{\binom{n}{m+1}}$ is given by $(-1)^n\gch(\mcl{C}^n_{\binom{n}{m+1}})(-1)$ and the theorem follows.
\end{proof}
Next we mention a few properties of the characteristic polynomial.
\begin{note}[Properties of the Characteristic Polynomial of the Discriminantal Arrangement]
	~\\
	\begin{itemize}
		\item The characteristic polynomial which arises from a discriminantal arrangement will have coefficients alternating in signs.
		\item $x^{m}(x-1)$ is a factor of $\gch(\mcl{C}^n_{\binom{n}{m+1}})(x)$ with 
		$\gch(\mcl{C}^n_{\binom{n}{m+1}})(1)=0$. 
		\item There are no bounded convex regions for the discriminantal arrangement and all its regions are unbounded.
	\end{itemize}
\end{note}
\section{\bf{Examples of four-line and five-line arrangements in the plane}}
\label{sec:Examples}
Here in this section we mention two examples of four-line and five-line arrangements. Later we make some observations which motivate the statement
of the main Theorem~\ref{theorem:CountingInvariance}.
\begin{example}
	\label{example:FourLA}
	Let $\mcl{N}=\{N_1,N_2,N_3,N_4\}$ be four lines in the plane $\mbb{R}^2$ giving rise to a normal system in $\mbb{R}^2$. 
	Then this gives rise to a discriminantal arrangement
	\equ{\mcl{C}^4_4=\{M_{\{1<2<3\}},M_{\{1<2<4\}},M_{\{1<3<4\}},M_{\{2<3<4\}}\}} in $\mbb{R}^4$. Let $\mcl{L}_4^2=\{L_1,L_2,L_3,L_4\}$ be any generic line arrangement consisting of four lines in 
	$\mbb{R}^2$ given by the normal system $\mcl{N}$. On any line $L_i$, there are exactly three vertices $L_i\cap L_j, L_i\cap L_k, L_i\cap L_l$ for $j,k,l\in\{1,2,3,4\}\bs \{i\}$. The middle vertex out of these three vertices is called the central point of the line $L_i$. There are exactly two lines (say) $L_i,L_j$ among $L_k,1\leq k\leq 4$ whose central points concide. This common central point must therefore be the intersection point of the two lines $L_i, L_j$. We define this common central point $P=L_i\cap L_j$ to be the	special point of the generic four-line arrangement $\mcl{L}_4^2$ as in Figure~\ref{fig:MinusOne}. It is unique for a generic four-line arrangement.
	\begin{figure}[h]
		\centering
		\includegraphics[width = 0.6\textwidth]{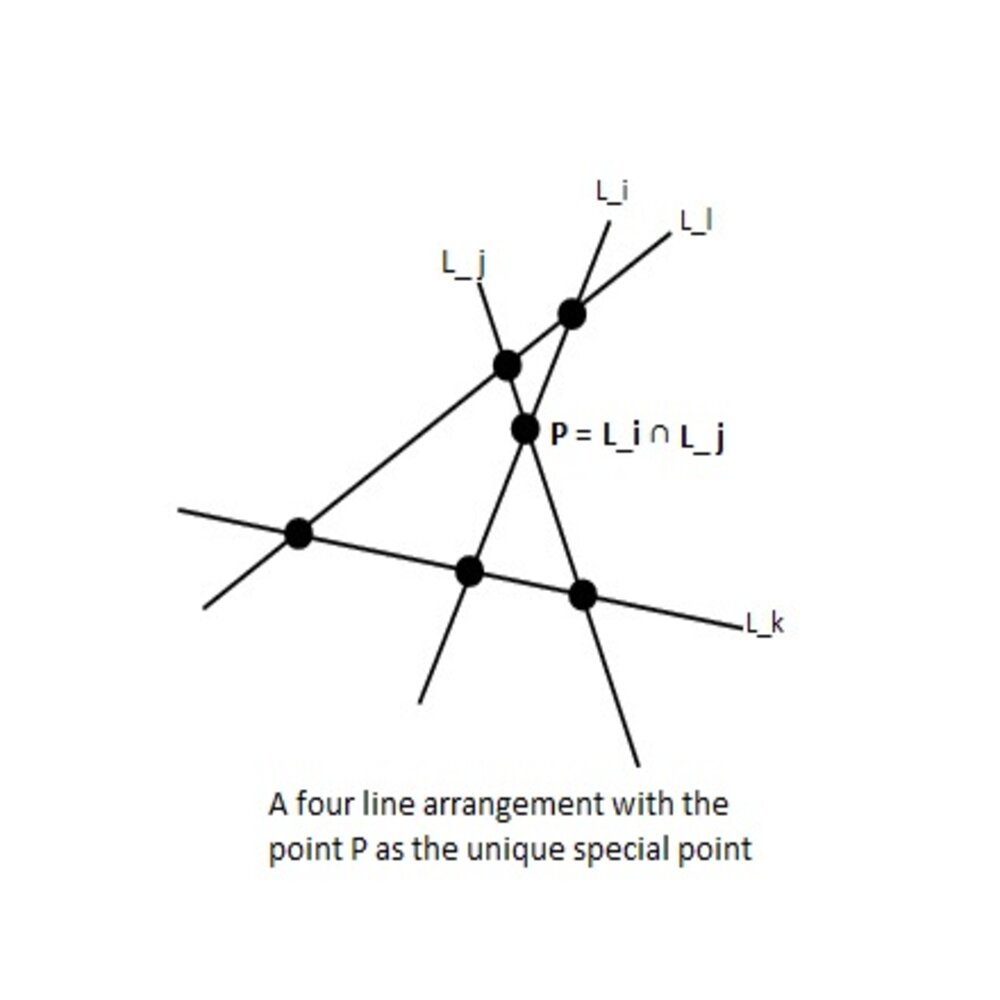}
		\caption{A Generic Four-Line Arrangement with Unique Special Point $P=L_i\cap L_j$}
		\label{fig:MinusOne}
	\end{figure}
	 If the subscripts $1,2,3,4$ of the lines are numbered with increasing order of angles the lines make, with respect to $X$-axis in the plane then there
	are four possibilities for special points of the four-line arrangements given by 
	\equ{L_1\cap L_2, L_2\cap L_3, L_3\cap L_4,L_4\cap L_1}
	as shown in Figure~\ref{fig:Zero} and the other two possibilities $L_1\cap L_3, L_2\cap L_4$ do not occur. Also we have two triangular regions in any generic four-line arrangement. They are given as follows.
	\begin{center}
		\begin{tabular}{ | l | l |}
			\hline
			Special Point & Triangular Regions \\ \hline
			$(12)=L_1\cap L_2$ & $(123)=\Gd L_1L_2L_3,(124)=\Gd L_1L_2L_4$\\ \hline
			$(23)=L_2\cap L_3$ & $(123)=\Gd L_1L_2L_3,(234)=\Gd L_2L_3L_4$\\ \hline
			$(34)=L_3\cap L_4$ & $(134)=\Gd L_1L_3L_4,(234)=\Gd L_2L_3L_4$\\ \hline
			$(14)=L_1\cap L_4$ & $(124)=\Gd L_1L_2L_4,(134)=\Gd L_1L_3L_4$\\ \hline
		\end{tabular}
	\end{center}
	\begin{figure}[h]
	\centering
	\includegraphics[width = 0.9\textwidth]{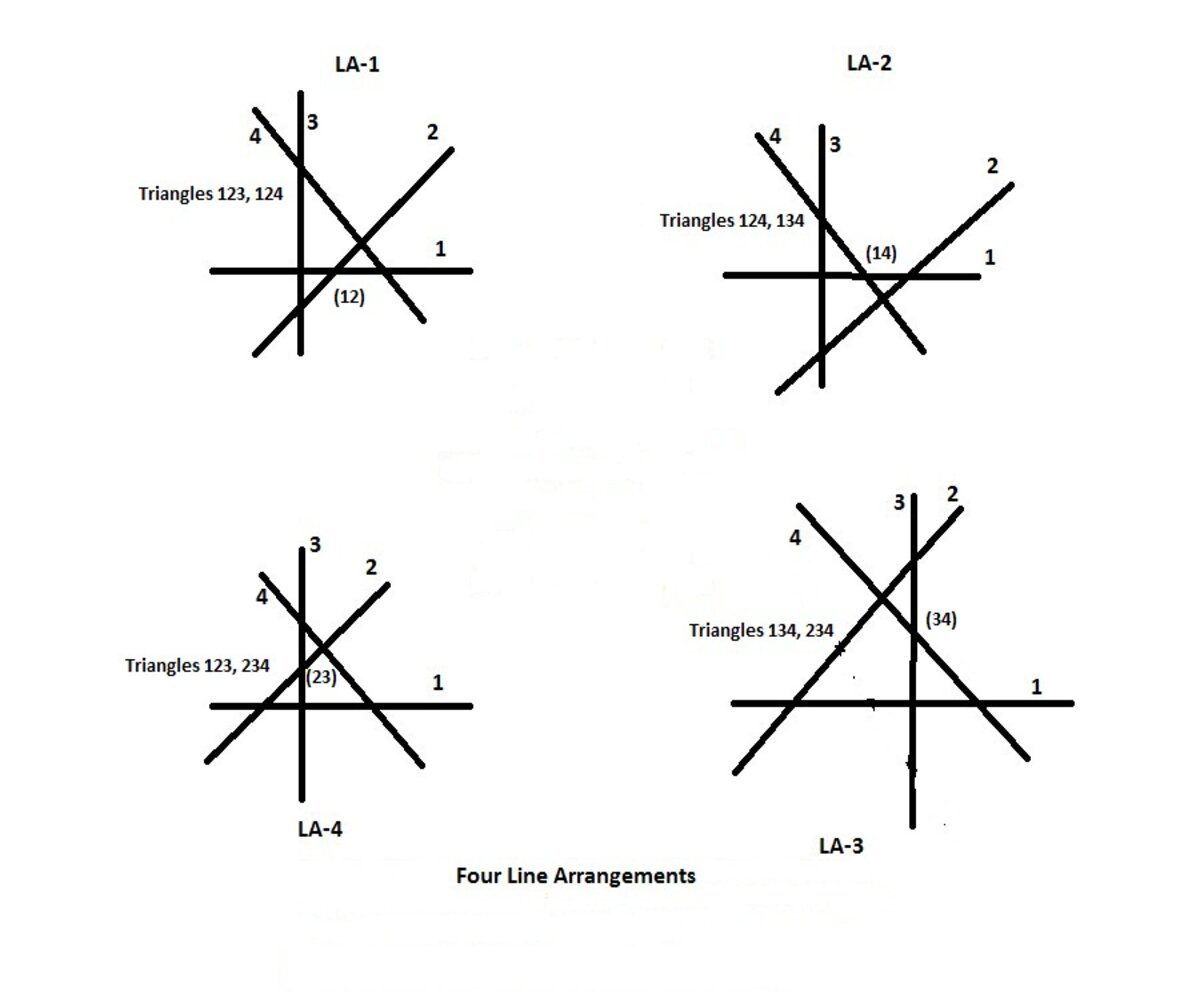}
	\caption{Isomorphism Classes of Generic Four-Line Arrangements in the Plane with Special Points LA-1: (12),LA-4: (23),LA-3: (34),LA-2: (14)}
	\label{fig:Zero}
\end{figure}
	These give rise to four different isomorphism classes as there cannot be an isomorphism between any two among them preserving the subscripts.
	We observe that each conical convex region of the discriminantal arrangement $\mcl{C}^4_4$ is bounded by two hyperplanes as there are two triangular regions for any generic four-line arrangement. This becomes clear in the following.
	Using Note~\ref{note:DimInt} the common intersection of hyperplanes of the discriminantal arrangement $\mcl{C}^4_4$ in $\mbb{R}^4$ is at least two-dimensional and by distinctness of the four three-dimensional hyperplanes
	it is exactly two-dimensional. We can project this two-dimensional subspace of $\mbb{R}^4$ to a point yielding a projection of $\mbb{R}^4$ to $\mbb{R}^2$. 
	Now the four distinct three-dimensional hyperplanes correspond under projection to four distinct lines passing through the origin 
	giving rise to eight conical convex regions for the discriminantal arrangement $\mcl{C}^4_4$. The opposite conical regions give the same isomorphism class under preservation of subscripts.
	So we obtain in another way that there are exactly four isomorphism classes of four-line arrangements under isomorphisms which preserve subscripts. 
	
	Let $\mcl{E}=\{\{1,2,3\},\{1,2,4\},\{1,3,4\},\{2,3,4\}\}$. There are five concurrency closed sub-collections are \equ{\{\{1,2,3\}\},\{\{1,2,4\}\},\{\{1,3,4\}\},\{\{2,3,4\}\},\mcl{E}.}
	Hence the normal system $\mcl{N}$ is concurrency free (refer to Definition~\ref{defn:ConcurrencyFree}). The characteristic polynomial of a discriminantal arrangement $\mcl{C}^4_4$
	is given by 
	\equa{&\gch(\mcl{C}^4_4)(x)=x^4-4x^3+3x^2=x^2(x-1)(x-3)\\ 
		&\text{ with }r(\mcl{C}^4_4)=(-1)^4\gch(\mcl{C}^4_4)(-1)=8.}
	There is only one isomorphism class under a general isomorphism (which need not preserve subscripts) of generic line arrangements consisting
	of four lines.
\end{example}
We present an example of five lines in the plane.
\begin{example}
	\label{example:FiveLA}
	Let $\mcl{N}=\{N_1,N_2,N_3,N_4,N_5\}$ be five lines in the plane $\mbb{R}^2$ giving rise to a normal system. 
	Then this gives rise to a discriminantal arrangement
	\equa{\mcl{C}^5_{10}&=\{M_{\{1<2<3\}},M_{\{1<2<4\}},M_{\{1<2<5\}},M_{\{1<3<4\}},M_{\{1<3<5\}},M_{\{1<4<5\}},\\
		&M_{\{2<3<4\}},M_{\{2<3<5\}},M_{\{2<4<5\}},M_{\{3<4<5\}}\}} 
	in $\mbb{R}^5$.
	Let $\mcl{E}=\{\{i_1<i_2<i_3\}\mid 1\leq i_1<i_2<i_3\leq 5\}$. The concurrency closed sub-collections are given by
	\begin{enumerate}
		\item $\{\{i_1<i_2<i_3\}\}, 1\leq i_1<i_2<i_3\leq 5$.
		\item $\{\{i_1<i_2<i_3\},\{j_1<j_2<j_3\}\}, 1\leq i_1<i_2<i_3\leq 5,1\leq j_1<j_2<j_3\leq 5,\{i_1,i_2,i_3\}\cup\{j_1,j_2,j_3\}=\{1,2,3,4,5\}$.
		\item $\{\{i_1<i_2<i_3\},\{i_1<i_2<i_4\},\{i_1<i_3<i_4\},\{i_2<i_3<i_4\}\}, 1\leq i_1<i_2<i_3<i_4\leq 5$.
		\item $\mcl{E}$.
	\end{enumerate}
	We observe that for any five-line arrangement $\mcl{L}^2_{5}$ in the plane $\mbb{R}^2$ the concurrencies of higher order corresponding to the above sub-collections $(1),(2),(3),(4)$ are of the following types.
	\begin{enumerate}
		\item Single concurrency of order $3$ which can be broken.
		\item Two concurrencies of order $3$ out of which exactly one line is common. This type can be broken into the previous type by a translation of one non-common line.
		\item One single concurrency of order $4$ which can be broken.
		\item One single concurrency of order $5$ which can also be broken. 
	\end{enumerate}
	Also refer to Note~\ref{note:BreakConcurrency}. Hence we have the dimensions of the intersections given by 
	\begin{enumerate}
		\item $\dim(M_{\{i_1<i_2<i_3\}} \cap M_{\{j_1<j_2<j_3\}})=3$ if $\#\big(\{i_1,i_2,i_3\}\cup\{j_1,j_2,j_3\}\big)=4\text{ or }5$.
		\item $\dim(M_{\{i_1<i_2<i_3\}} \cap M_{\{j_1<j_2<j_3\}} \cap M_{\{k_1<k_2<k_3\}})=3$\\ if $\#\big(\{i_1,i_2,i_3\}\cup \{j_1,j_2,j_3\}\cup\{k_1,k_2,k_3\}\big)=4$.
		\item $\dim(M_{\{i_1<i_2<i_3}\} \cap M_{\{j_1<j_2<j_3\}} \cap M_{\{k_1<k_2<k_3\}})=2$\\ if $\#\big(\{i_1,i_2,i_3\}\cup \{j_1,j_2,j_3\}\cup\{k_1,k_2,k_3\}\big)=5$.
		\item $\dim(M_{\{i_1<i_2<i_3}\} \cap M_{\{j_1<j_2<j_3\}} \cap M_{\{k_1<k_2<k_3\}} \cap M_{\{l_1<l_2<l_3\}})=3$\\ if $\#\big(\{i_1,i_2,i_3\}\cup \{j_1,j_2,j_3\}\cup\{k_1,k_2,k_3\}\cup \{l_1,l_2,l_3\}\big)=4$.
		\item $\dim(M_{\{i_1<i_2<i_3}\} \cap M_{\{j_1<j_2<j_3\}} \cap M_{\{k_1<k_2<k_3\}} \cap M_{\{l_1<l_2<l_3\}})=2$\\ if $\#\big(\{i_1,i_2,i_3\}\cup \{j_1,j_2,j_3\}\cup\{k_1,k_2,k_3\}\cup \{l_1,l_2,l_3\}\big)=5$.
	\end{enumerate}
	The higher order intersections have dimension $2$. Hence any normal system $\mcl{N}$ of lines in plane $\mbb{R}^2$ of cardinality five is concurrency free.
	
	To count the number of convex regions we compute the characteristic polynomial of the discriminantal arrangement $\mcl{C}^5_{10}$ arising
	from a normal system. This polynomial is given by 
	\equa{\gch(\mcl{C}^5_{10})(x)&=x^5-[(10x^4)-(15x^3+30x^3)+(20x^3+100x^2)-(5x^3+205x^2)\\
		&+252x^2-210x^2+120x^2-45x^2+10x^2-x^2]\\
		&=x^5-10x^4+30x^3-21x^2.}
	So the characteristic polynomial which is alternating in signs is given by
	\equ{\gch(\mcl{C}^5_{10})(x)=x^5-10x^4+30x^3-21x^2=x^2(x-1)(x^2-9x+21).}
	Hence the number of convex regions is given by 
	\equ{r(\mcl{C}^5_{10})=(-1)^5\gch(\mcl{C}^5_{10})(-1)=62.}
	We know that opposite cones of the discriminantal arrangement $\mcl{C}^5_{10}$ correspond to isomorphic generic line arrangements under isomorphisms which preserve subscripts.
	Now we index the subscripts $1,2,3,4,5$ of the lines with increasing order of the angles the lines make with respect to the $X$-axis and list out $31=\frac{r(\mcl{C}^5_{10})}{2}$ 
	distinct isomorphism classes of line arrangements. These are the only and all possibilities..
	First we list $7$ five-line arrangements in the plane as shown in Figure~\ref{fig:One}.
	\begin{figure}[h]
		\centering
		\includegraphics[width = 1.0\textwidth]{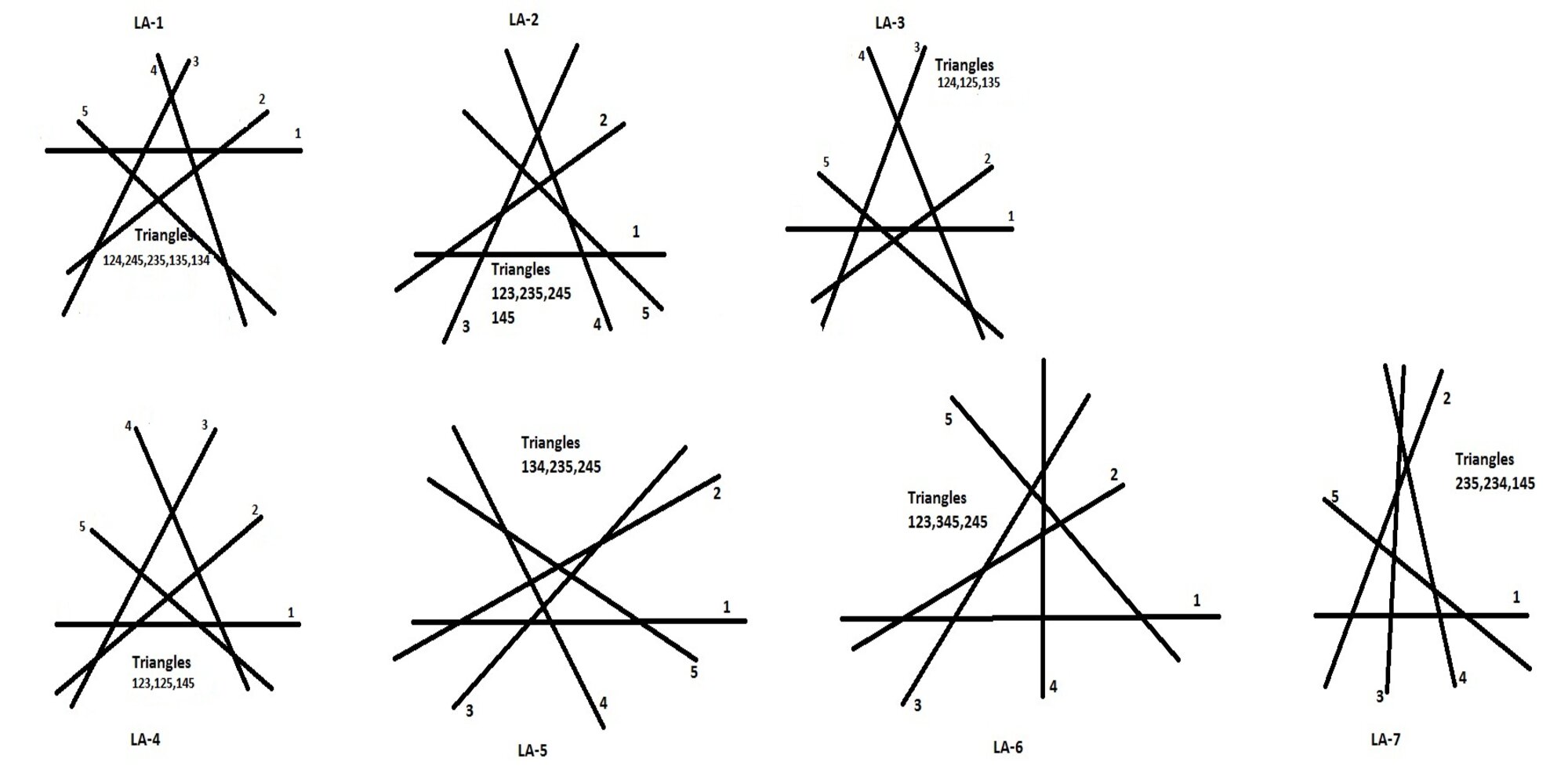}
		\caption{Seven Five-Line Arrangements in the Plane}
		\label{fig:One}
	\end{figure}
	The set of triangles for the first one
	is invariant under cyclic translates because of its symmetry and there are five cyclic translates for each of the 
	remaining ones:
	\begin{enumerate}
		\item \{124,245,235,135,134\}
		\item \equa{\{123,235,245,145\} \lra &\{234,134,135,125\},\{345,245,124,123\},\\ &\{145,135,235,234\},\{125,124,134,345\},} 
		\item \equa{\{135,125,124\} \lra &\{124,123,235\},\{235,234,134\},\{134,345,245\},\\ &\{245,145,135\},}
		\item \equa{\{123,125,145\} \lra &\{234,123,125\},\{345,234,123\},\{145,345,234\},\\ &\{125,145,345\},}
		\item \equa{\{134,235,245\} \lra &\{245,134,135\},\{135,245,124\},\{124,135,235\},\\ &\{235,124,134\},}
		\item \equa{\{345,123,245\} \lra &\{145,234,135\},\{125,345,124\},\{123,145,235\},\\ &\{234,125,134\},}
		\item \equa{\{235,234,145\} \lra &\{134,345,125\},\{245,145,123\},\{135,125,234\},\\ &\{124,123,345\},}
	\end{enumerate}
	totalling to $31$ distinct cones as they all have different sets of triangles. Hence there are exactly
	\equa{&\text{thirty one isomorphism classes under isomorphisms}\\&\text{which are trivial on the subscripts. }} 
\end{example}
\begin{note}
	Not all normal systems of lines in the plane of cardinality six are concurrency free (refer to Note~\ref{note:Nonconcurrencyfree}) unlike
	normal systems of cardinality four and five in the plane in Examples~[\ref{example:FourLA},~\ref{example:FiveLA}]. 
\end{note}
\begin{note}
	Examples~[\ref{example:FourLA},~\ref{example:FiveLA}] motivate the following question.
	For any normal system $\mcl{N}=\{L_1,L_2,\ldots,L_n\}$ in $\mbb{R}^m, n>m>1$, which cones of its associated discriminantal arrangement $\mcl{C}^n_{\binom{n}{m+1}}$ 
	correspond to the same isomorphism class under isomorphisms which preserve subscripts? We answer this question in the next section.
\end{note}
\section{\bf{The main result}}
\label{sec:MainResult}
~\\
The number of cones of the discriminantal arrangement $\mcl{C}^n_{\binom{n}{m+1}}$ and the homotopy type of the discriminantal arrangement have been studied before. Though generically the homotopy type does not change, the type does depend on the normal system that we begin with.  Example $3.2$ in M.~Falk~\cite{MR1209098} gives a discriminantal arrangement whose homotopy type is different from that of the generic type of ``very generic" discriminantal arrangements. This is further discussed below in Example~\ref{Example:SixPlanes}. We will give another such Example~\ref{Example:SixLines} in Section~\ref{sec:NonConcurrencyFree} which is more relevant to this article. In a much recent article A.~Libgober and S.~Settepanella~\cite{MR3899551} describe discriminantal arrangements which admit a codimension two strata of multiplicity three, whereas a generic type of ``very generic" discriminantal arrangements in the sense of C.~A.~Athanasiadis~\cite{MR1720104} has the property that they admit a codimension two strata only having multiplicity two or $m+2$. 

In this section we prove a few preliminary results before we prove main Theorem~\ref{theorem:CountingInvariance}.  
\begin{theorem}
	\label{theorem:CA}
	~\\
	Let $n>m>1$ be two positive integers.
	Let $\mcl{N}=\{L_1,L_2,\ldots,L_n\}$ be a normal system in $\mbb{R}^m$. Let $\mcl{C}^n_{\binom{n}{m+1}}$ be its associated discriminantal arrangement. Let $\mcl{H}_n^m=\{H_1,H_2,\ldots,H_n\}$ be an arrangement given by the normal system. Let the constant coefficient vector 
	be given by $(b_1,b_2,\ldots,b_n)\in \mbb{R}^n$ which lies in the interior of a cone $C$ of the discriminantal arrangement $\mcl{C}^n_{\binom{n}{m+1}}$.
	Suppose the subscripts $1\leq i_1<i_2<\ldots<i_m<i_{m+1}\leq n$ give rise to an $m$-dimensional 
	simplex polyhedrality, a polyhedral region in the arrangement $\mcl{H}_n^m$. Let the constant coefficient vector $(b_1,b_2,\ldots,b_n)$ 
	which lies in the interior of the cone $C$ be moved to the interior of its adjacent cone $D$ (say) by passing through single boundary hyperplane (co-dimension one) corresponding to $1\leq i_1<i_2<\ldots<i_m<i_{m+1}\leq n$ to a new constant coefficient vector $(\ti{b}_1,\ti{b}_2,\ldots,\ti{b}_n)$ giving rise to a new hyperplane arrangement $\ti{\mcl{H}}_n^m$. Let $A=\{j_1,j_2,\ldots,j_{m-1}\} \subs \{i_1<i_2<\ldots<i_m<i_{m+1}\}$ be any subset of cardinality
	$(m-1)$. Then, only on the lines of the form $\us{i\in A}{\bigcap}H_i$, there occurs a swap of points  \equ{\us{i\in A}{\bigcap}H_i\cap H_{j_m},\us{i\in A}{\bigcap}H_i\cap H_{j_{m+1}}} giving the required order of points on the corresponding lines of the arrangement $\ti{\mcl{H}}_n^m$. There is no change in the order of points on the remaining lines.  
\end{theorem}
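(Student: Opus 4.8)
The plan is to realise the passage from $C$ to $D$ as a straight one-parameter family of constant-coefficient vectors and to read off the order of vertices on each line combinatorially from that family. Choosing the two interior points near a common relative-interior point of the shared facet, I would take $b(t)=(1-t)(b_1,\dots,b_n)+t(\ti b_1,\dots,\ti b_n)$, $t\in(0,1)$, so that the segment meets $M_{\{i_1,\dots,i_{m+1}\}}$ exactly once, transversally, at some $t_0\in(0,1)$, and meets no other hyperplane of $\mcl{C}^n_{\binom{n}{m+1}}$ (in particular it is contained in none of them, since its endpoints lie in the interiors of cones). Write $H_i^{(t)}$ for the hyperplane with the fixed normal and constant term $b_i(t)$. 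For each $A'\subseteq\{1,\dots,n\}$ with $|A'|=m-1$, the line $L_{A'}(t)=\bigcap_{i\in A'}H_i^{(t)}$ has a fixed direction vector $w_{A'}$ spanning the (nonzero) kernel of the $(m-1)\times m$ matrix of rows $\{a_i:i\in A'\}$, and a base point that may be taken to depend linearly on $b(t)$; parametrising $L_{A'}(t)$ accordingly, the vertex $L_{A'}(t)\cap H_k^{(t)}$ (for $k\notin A'$) has a real coordinate $s^{A'}_k\bigl(b(t)\bigr)$, where $s^{A'}_k$ is a linear form in $b$ because $a_k\cdot w_{A'}\neq 0$ — the latter again by the defining property of a normal system, that any $m$ of the normals are independent. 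Thus along the segment the combinatorial order of vertices on $L_{A'}$ is simply the order of the affine functions $t\mapsto s^{A'}_k(b(t))$.

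Next I would show that order changes correspond exactly to wall crossings. On the complement in $[0,1]$ of the finite set where two of the $s^{A'}_k(b(t))$ agree, the order on $L_{A'}$ is locally constant, and it changes across such a value only when some difference $s^{A'}_k(b(t))-s^{A'}_l(b(t))$ changes sign; and $s^{A'}_k(b)=s^{A'}_l(b)$ holds if and only if the vertices $L_{A'}\cap H_k$ and $L_{A'}\cap H_l$ coincide, which — any $m$ normals being independent, so that any $m+1$ of the hyperplanes have at most one common point — happens exactly when $\{H_i:i\in A'\}\cup\{H_k,H_l\}$ are concurrent, i.e. exactly when $b\in M_{A'\cup\{k,l\}}$. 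Since along the segment the only hyperplane of $\mcl{C}^n_{\binom{n}{m+1}}$ that is met is $M_{\{i_1,\dots,i_{m+1}\}}$, an order change on $L_{A'}$ can occur only at $t_0$ and only for a pair $\{k,l\}$ with $A'\cup\{k,l\}=\{i_1,\dots,i_{m+1}\}$. If $A'\not\subseteq\{i_1,\dots,i_{m+1}\}$ no such pair exists, so the order of vertices on $L_{A'}$ is unchanged — this gives the claim about the remaining lines. If $A'=A\subseteq\{i_1,\dots,i_{m+1}\}$ with $|A|=m-1$, the only candidate is $\{j_m,j_{m+1}\}=\{i_1,\dots,i_{m+1}\}\setminus A$, so on $L_A=\bigcap_{i\in A}H_i$ the only possible change is the transposition of $L_A\cap H_{j_m}$ and $L_A\cap H_{j_{m+1}}$.

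It remains to confirm that this transposition actually occurs and to pin down the new order. The signed gap $g_A(b):=s^A_{j_{m+1}}(b)-s^A_{j_m}(b)$ between these two vertices along $L_A$ and the linear form $\ell$ obtained by expanding the determinant of Definition~\ref{defn:CA} that defines $M_{\{i_1,\dots,i_{m+1}\}}$ are both nonzero linear forms on $\mbb{R}^n$ with the same zero set, namely $M_{\{i_1,\dots,i_{m+1}\}}$ itself (each vanishes exactly when $H_{i_1}^{b},\dots,H_{i_{m+1}}^{b}$ are concurrent); hence $g_A=c_A\,\ell$ with $c_A\neq 0$. Since the segment crosses $M_{\{i_1,\dots,i_{m+1}\}}$ transversally, $\ell(b(t))$, and therefore $g_A(b(t))$, changes sign at $t_0$: the two vertices swap. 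Moreover the simplex hypothesis says the segment of $L_A$ joining them is the edge, lying on $\bigcap_{i\in A}H_i$, of the bounded simplicial region cut out by $H_{i_1},\dots,H_{i_{m+1}}$, and that this simplex is a region of $\mcl{H}_n^m$; so the relative interior of that edge meets no other $H_p$, i.e. $L_A\cap H_{j_m}$ and $L_A\cap H_{j_{m+1}}$ are consecutive among all vertices on $L_A$. The same reasoning — no hyperplane of $\mcl{C}^n_{\binom{n}{m+1}}$ beyond $M_{\{i_1,\dots,i_{m+1}\}}$ is met along the segment — shows that simplex persists as a region of $\ti{\mcl{H}}_n^m$, so in $\ti{\mcl{H}}_n^m$ the two vertices are again consecutive on $L_A$, now in the reversed order, which is the order realised by $\ti{\mcl{H}}_n^m$. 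Running this over each of the $\binom{m+1}{m-1}$ choices of $A\subseteq\{i_1,\dots,i_{m+1}\}$ finishes the proof.

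The main obstacle is the bridge made in the last two steps: rigorously matching ``a change of the combinatorial order of vertices on a line'' with ``the constant-coefficient vector crossing one specific hyperplane of the discriminantal arrangement,'' and in particular the Cramer-type identity that the signed vertex gap $g_A$ is a nonzero scalar multiple of the determinant form $\ell$. Once these are in place, the rest is bookkeeping that rests entirely on the genericity built into the definition of a normal system.
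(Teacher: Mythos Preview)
Your argument is correct. The paper, by contrast, disposes of this theorem in a single sentence: ``The proof of this theorem is immediate as the orientation of the simplex polyhedrality $\Delta^m H_{i_1}H_{i_2}\ldots H_{i_m}H_{i_{m+1}}$ corresponding to $1\leq i_1<i_2<\ldots<i_m<i_{m+1}\leq n$ changes.'' So the paper's proof is a geometric assertion --- crossing the wall $M_{\{i_1,\dots,i_{m+1}\}}$ flips the orientation of the associated simplex, hence swaps the two vertices on each of its edges and touches nothing else --- whereas you supply the analytic substance behind that assertion: the linearity (via Cramer) of the vertex coordinates $s^{A'}_k(b)$, the identification of the vanishing locus of each gap $s^{A'}_k-s^{A'}_l$ with the discriminantal hyperplane $M_{A'\cup\{k,l\}}$, and the proportionality $g_A=c_A\ell$ forcing an actual sign change at the transversal crossing. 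What you gain is a self-contained proof that does not appeal to intuition about ``orientation''; what the paper gains is brevity. Your consecutiveness argument (the two swapped vertices are adjacent on $L_A$ both before and after, because the simplex edge is a region boundary and no other $M$-hyperplane is crossed) is a point the paper leaves entirely implicit but which is needed for the ``required order'' clause of the statement, so your version is in fact the more complete of the two.
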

\begin{proof}
	The proof of this theorem is immediate as the orientation of the simplex polyhedrality $\Gd^m H_{i_1}H_{i_2}\ldots H_{i_m}H_{i_{m+1}}$ corresponding to $1\leq i_1<i_2<\ldots<i_m<i_{m+1}\leq n$ changes.
\end{proof}

Now we prove a theorem which implies that the only possible distinct cones which give isomorphic generic hyperplane arrangements under isomorphisms which preserve subscripts are the antipodal 
pairs of cones.
\begin{theorem}
	\label{theorem:AntipodeCones}
	~\\
	Let $n>m>1$ be two positive integers. Let $\mcl{N}=\{L_1,L_2,\ldots,L_n\}$ be a normal system in $\mbb{R}^m$. Let $\mcl{C}^n_{\binom{n}{m+1}}$ be its associated discriminantal arrangement. 
	Let $\mcl{C}one^n_{\binom{n}{m+1}}$ be the set of conical convex regions of the discriminantal arrangement $\mcl{C}^n_{\binom{n}{m+1}}$. The interiors of 
	$C,D\in \mcl{C}one^n_{\binom{n}{m+1}}$ give rise to isomorphic generic hyperplane arrangements under an isomorphism which preserves subscripts if and only if $C=\pm D$. 
\end{theorem}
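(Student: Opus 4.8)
The plan is to attach to each cone $C$ of $\mcl{C}^n_{\binom{n}{m+1}}$ a sign vector and to show that the combinatorial type of the associated generic arrangement $\mcl{H}(C)$ --- the orders of vertices on all its $1$-dimensional flats --- recovers this sign vector up to one global sign. For $S=\{i_1<\ldots<i_{m+1}\}\in\mcl{E}$ and $b\in\mbb{R}^n$ let $\gep_S(b)\in\{-1,0,1\}$ be the sign of the determinant defining $M_S$ with the final ($y$-)column specialised to $(b_{i_1},\ldots,b_{i_{m+1}})$; for $b$ in the open cone $C$ this is $\pm1$ and constant, giving $\gs(C)=(\gep_S(C))_{S\in\mcl{E}}$. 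Here $\mcl{H}(C)$ denotes the arrangement with constants in $\mrm{int}(C)$, which is well defined up to subscript-preserving isomorphism by the $(\Leftarrow)$ argument below (equivalently Note~\ref{note:Convention}). Two elementary facts: the cones are separated by the hyperplanes $M_S$, so $C\mapsto\gs(C)$ is injective; and negating the last column negates every determinant, so $\gs(-C)=-\gs(C)$. Granting the key claim that a subscript-preserving isomorphism $\mcl{H}(C)\cong\mcl{H}(D)$ forces $\gs(D)=\pm\gs(C)$, the $(\Rightarrow)$ direction follows at once: $\gs(D)=\gs(C)\Rightarrow D=C$, and $\gs(D)=-\gs(C)=\gs(-C)\Rightarrow D=-C$.

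The $(\Leftarrow)$ direction is the easy half. If $C=D$, a constant vector may be moved within $\mrm{int}(C)$ without meeting any $M_S$, hence without any two vertices of $\mcl{H}$ colliding, so the vertex order on every line is unchanged and the identity on subscripts is an isomorphism. If $C=-D$, the linear involution $x\mapsto -x$ of $\mbb{R}^m$ carries the arrangement with constants $b$ to the one with constants $-b$, is subscript-preserving, maps lines to lines, and reverses the vertex order on each line; a reversed order is one of the four admissible orders of Definition~\ref{defn:Iso}, so $\mcl{H}(C)\cong\mcl{H}(-D)=\mcl{H}(D)$.

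For the key claim I would first reduce to $n\geq m+2$ (when $n=m+1$ there are only two cones and they are antipodal). The crux is a determinant identity. Take $S,S'\in\mcl{E}$ with $|S\cap S'|=m$, so $T:=S\cap S'$ has $m$ elements and $p:=S\setminus S'$, $q:=S'\setminus S$ are singletons; let $A_T=[a_{ij}]_{i\in T,\,1\leq j\leq m}$ (invertible since $\mcl{N}$ is a normal system), $V_T=\us{i\in T}{\bigcap}H_i$, and $f_p(x)=\big(\sum_{j=1}^{m}a_{pj}x_j\big)-b_p$. The Schur complement gives $\Det\begin{pmatrix}A_T&b_T\\ a_p&b_p\end{pmatrix}=\Det(A_T)\,(b_p-a_p\!\cdot\!V_T)=-\Det(A_T)\,f_p(V_T)$; putting the rows back in increasing index order (the order in the matrix defining $M_S$) only changes a sign, so $\gep_S(C)$ equals a combinatorial sign times $\mrm{sign}(\Det A_T)\,\mrm{sign}(f_p(V_T))$, and likewise for $S',q$. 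Multiplying, the common factor $\mrm{sign}(\Det A_T)$ squares away:
\equ{\gep_S(C)\,\gep_{S'}(C)=\gz\cdot\mrm{sign}(f_p(V_T))\cdot\mrm{sign}(f_q(V_T)),\qquad \gz \text{ independent of } C.}
Then a one-line computation on the line $\gl=\us{i\in T\setminus\{t_0\}}{\bigcap}H_i$ (for any chosen $t_0\in T$) finishes it: parametrising $\gl$ by $V_{(T\setminus\{t_0\})\cup\{p\}}+su$ with $u$ a direction vector gives $f_p=s\,(a_p\!\cdot\!u)$ along $\gl$, and similarly at $q$; since replacing $u$ by $-u$ flips both $a_p\!\cdot\!u$ and $a_q\!\cdot\!u$, one obtains that $\mrm{sign}(f_p(V_T))\,\mrm{sign}(f_q(V_T))$ equals a $C$-independent sign times the bit ``$V_T$ lies outside the open segment of $\gl$ between $V_{(T\setminus\{t_0\})\cup\{p\}}$ and $V_{(T\setminus\{t_0\})\cup\{q\}}$'' (the three named vertices being distinct because $b\in\mrm{int}(C)$). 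That betweenness bit is symmetric under reversal of the order on $\gl$, hence is preserved by any subscript-preserving isomorphism, so $\gep_S(C)\gep_{S'}(C)=\gep_S(D)\gep_{S'}(D)$ whenever $|S\cap S'|=m$. Since the Johnson graph on $(m+1)$-subsets of $\{1,\ldots,n\}$ is connected for $n\geq m+2$, telescoping along a path between arbitrary $S,S'\in\mcl{E}$ upgrades this to all pairs; fixing $S_0$ and putting $\get=\gep_{S_0}(C)\gep_{S_0}(D)$ then forces $\gep_S(D)=\get\,\gep_S(C)$ for every $S$, i.e.\ $\gs(D)=\pm\gs(C)$.

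The genuinely load-bearing step is the displayed identity together with its combinatorial reading --- that for $|S\cap S'|=m$ the product $\gep_S\gep_{S'}$ is a fixed sign (determined by the normal system, not the cone) times a betweenness invariant of $\mcl{H}(C)$. Everything else is formal: injectivity and antipodal behaviour of $\gs$, the telescoping, and connectivity of the Johnson graph. The delicate points inside it are the sign bookkeeping --- the row-permutation sign in $\Det M_S$, and isolating the normal-system-dependent signs in the line-$\gl$ computation while checking they do not depend on $C$ --- and one should also dispatch the degenerate case $n=m+1$ separately. (One could instead try to leverage Theorem~\ref{theorem:CA} via the adjacency graph of cones, but the determinant route above seems cleaner and self-contained.)
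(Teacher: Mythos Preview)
Your proof is correct. Both you and the paper reduce to showing that a subscript-preserving isomorphism forces the sign vectors to satisfy $\gs(D)=\pm\gs(C)$, but the routes diverge at that point. The paper argues dynamically: it walks from $C$ to $D$ through adjacent cones, invokes Theorem~\ref{theorem:CA} to say that each wall-crossing through $M_S$ swaps exactly the relevant pair of vertices on each edge-line of the simplex $\Gd_S$, and then asserts that since the final vertex orders on all lines agree with the initial ones (by the isomorphism hypothesis), the net effect is either no swap globally or a full reversal on every line, whence $\gs(D)=\pm\gs(C)$. Your route is static and more explicit: the Schur-complement identity pins down $\gep_S\gep_{S'}$ for $|S\cap S'|=m$ as a normal-system sign times a three-point betweenness bit on a single line of the arrangement, which is manifestly an isomorphism invariant; Johnson-graph connectivity then propagates this to all pairs. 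Your argument is self-contained (it does not appeal to Theorem~\ref{theorem:CA}) and makes fully transparent the step the paper leaves to the reader --- namely why per-line order agreement, which a priori allows an independent reversal choice on each line, actually forces one global sign. The paper's argument is shorter but leans on that dichotomy being evident from the swap description.
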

\begin{proof}
	One implication is clear.
	For every cone in $\mcl{C}one^n_{\binom{n}{m+1}}$ we associate a unique sign vector corresponding to each hyperplane of the discriminantal arrangement $\mcl{C}^n_{\binom{n}{m+1}}$. While moving from a cone to the adjacent cone through a hyperplane of $\mcl{C}^n_{\binom{n}{m+1}}$ there will be a swap of points as given in Theorem~\ref{theorem:CA} and sign of this hyperplane changes. If $C$ and $D$ are isomorphic cones, then while moving from $C$ to $D$ via hyperplanes there will be swap of points on lines and correspondingly there will be  sign changes of hyperplanes in $\mcl{C}^n_{\binom{n}{m+1}}$. Since $C$ and $D$ are isomorphic and the order of points on each line of corresponding hyperplane arrangements agree, we have that effectively the points on all lines undergo swapping or effectively no swapping occurs. If no swapping occurs the sign vector of cone $C$ matches with the sign vector of cone $D$. Hence $C=D$. In the other scenario, the sign vector of cone $C$ is the negative of the sign vector of cone $D$ and hence $C=\pm D$. This proves the theorem.
\end{proof}
Now we prove main Theorem~\ref{theorem:CountingInvariance} of the article.
\begin{proof}
	Using previous Theorem~\ref{theorem:AntipodeCones} there are exactly half of the number of convex cones of the discriminantal arrangement $\mcl{C}^n_{\binom{n}{m+1}}$ for any normal 
	system $\mcl{N}$ which give rise to distinct isomorphism classes. This proves main Theorem~\ref{theorem:CountingInvariance}.
\end{proof}
Here we prove a theorem about the number of isomorphism classes under isomorphisms which preserve subscripts for a normal system $\mcl{N}$ that is concurrency free.
\begin{theorem}
	\label{theorem:CountIsoClasses}
	Let $n>m>1$ be two positive integers. 
	Let \equ{\mcl{C}^n_{\binom{n}{m+1}}=\{M_{\{i_1<i_2<\ldots<i_m<i_{m+1}\}}\mid 1\leq i_1<i_2<\ldots<i_m<i_{m+1}\leq n\}} 
	be the discriminantal arrangement given by a normal system $\mcl{N}=\{L_1,L_2,\ldots,L_n\}$ which is concurrency free (refer to Definition~\ref{defn:ConcurrencyFree}) defined over $\mbb{R}$.
	Let $\mcl{E}=\{\{i_1,i_2,\ldots,i_m,$ $i_{m+1}\}\mid 1\leq i_1<i_2<\ldots<i_m<i_{m+1}\leq n\}$. Then the number of isomorphism classes of hyperplane arrangements
	under isomorphisms which preserve subscripts is given combinatorially by 
	\equ{\frac12 r(\mcl{C}^n_{\binom{n}{m+1}})=\frac{(-1)^n}{2}\gch(\mcl{C}^n_{\binom{n}{m+1}})(-1)=\frac 12\us{\mcl{D}\subs \mcl{E}}{\sum}(-1)^{n+\#(\mcl{D})+d_{\mcl{D}}}=\frac 12\us{\mcl{D}\subs \mcl{E}}{\sum}(-1)^{\#(\mcl{D})+rank(\overline{\mcl{D}})}}
	where 
	\begin{itemize}
		\item $\gch(\mcl{C}^n_{\binom{n}{m+1}})$ is the characteristic polynomial of the discriminantal arrangement $\mcl{C}^n_{\binom{n}{m+1}}$
		\item $n-d_\mcl{D}=rank(\overline{\mcl{D}})$ is the cardinality of a base collection $\ti{\mcl{D}}$ for $\overline{\mcl{D}}$.
	\end{itemize}
\end{theorem}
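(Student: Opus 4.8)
The plan is to obtain this statement by assembling three results already in place: the main Theorem~\ref{theorem:CountingInvariance}, the characteristic polynomial computation in Theorem~\ref{theorem:CharpolyCA}, and the dimension formula in Theorem~\ref{theorem:DimIntersection}. Essentially all the geometric content lives in those earlier results, so what remains is bookkeeping.

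First I would invoke Theorem~\ref{theorem:CountingInvariance} (whose engine is Theorem~\ref{theorem:AntipodeCones}): the isomorphism classes of generic hyperplane arrangements $\mcl{H}_n^m$ with normal system $\mcl{N}$ under isomorphisms preserving subscripts are in bijection with the antipodal pairs of convex cones of $\mcl{C}^n_{\binom{n}{m+1}}$, hence their number equals $\frac12 r(\mcl{C}^n_{\binom{n}{m+1}})$, where $r$ is the total number of convex cones. Here $r$ is automatically even: since $\mcl{C}^n_{\binom{n}{m+1}}$ is a nonempty central arrangement, no region can be invariant under $x\mapsto -x$ (such a region would be convex, open, and forced to contain the origin, which lies on every hyperplane), so the cones split into antipodal pairs.

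Next I would apply Zaslavsky's theorem in the form already used for Theorem~\ref{theorem:NumberofRegions}, giving $r(\mcl{C}^n_{\binom{n}{m+1}}) = (-1)^n\gch(\mcl{C}^n_{\binom{n}{m+1}})(-1)$, so the desired count is $\frac{(-1)^n}{2}\gch(\mcl{C}^n_{\binom{n}{m+1}})(-1)$. Because $\mcl{N}$ is concurrency free, Theorem~\ref{theorem:CharpolyCA} expands $\gch(\mcl{C}^n_{\binom{n}{m+1}})(x) = \sum_{\mcl{D}\subs\mcl{E}}(-1)^{\#(\mcl{D})}x^{d_{\mcl{D}}}$ with $d_{\mcl{D}} = n - rank(\overline{\mcl{D}})$ a combinatorial invariant by Theorem~\ref{theorem:DimIntersection}. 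Substituting $x=-1$ gives $\frac12\sum_{\mcl{D}\subs\mcl{E}}(-1)^{n+\#(\mcl{D})+d_{\mcl{D}}}$, and since $n + d_{\mcl{D}} = 2n - rank(\overline{\mcl{D}}) \equiv rank(\overline{\mcl{D}}) \pmod 2$, this equals $\frac12\sum_{\mcl{D}\subs\mcl{E}}(-1)^{\#(\mcl{D})+rank(\overline{\mcl{D}})}$, which is the asserted formula. Finally I would note that $\mcl{E}$, the closures $\overline{\mcl{D}}$, and $rank(\overline{\mcl{D}})$ (the cardinality of any base collection of $\overline{\mcl{D}}$) are all defined purely in terms of $n,m$ and the combinatorics of subsets, with no reference to the coordinates of $\mcl{N}$; hence the count is combinatorially determined and identical for every concurrency free normal system.

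The main obstacle is not really a mathematical one here — it is the legitimacy of replacing the a priori coordinate-dependent dimension $d_{\mcl{D}}$ by the combinatorial $n - rank(\overline{\mcl{D}})$ inside the sum, which is precisely the assertion of Theorem~\ref{theorem:DimIntersection} and depends on the concurrency free hypothesis, plus the small parity check $n + d_{\mcl{D}} \equiv rank(\overline{\mcl{D}})\pmod 2$ and its consistency with the factor $\tfrac12$ coming from the antipodal pairing. Everything else is a direct substitution.
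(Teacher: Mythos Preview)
Your proposal is correct and follows essentially the same route as the paper: the paper's own proof is a one-liner stating that the result follows from main Theorem~\ref{theorem:CountingInvariance} and Theorem~\ref{theorem:CharpolyCA}. You have simply unpacked those two ingredients explicitly (the antipodal-pair bijection giving the factor $\tfrac12$, and the Whitney/Zaslavsky expansion of $r$ via the characteristic polynomial), and in fact some of your intermediate steps---the identity $r=(-1)^n\gch(-1)$ and the parity reduction $n+d_{\mcl{D}}\equiv rank(\overline{\mcl{D}})\pmod 2$---are already recorded inside the statement of Theorem~\ref{theorem:CharpolyCA}, so your invocation of Theorem~\ref{theorem:DimIntersection} separately is redundant but harmless.
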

\begin{proof}
	This theorem follows from main Theorem~\ref{theorem:CountingInvariance} and Theorem~\ref{theorem:CharpolyCA}.
\end{proof}
\section{\bf{On the generic nature of the concurrency free condition of a normal system}}
\label{sec:gennature}
Here in this section we show that a generic normal system $\mcl{N}$ is concurrency free.
First we begin with an observation. 
\begin{note}
	\label{note:dimensioncuttingbyone}
	Let $V$ be a finite dimensional vector space of dimension $n$ over the field $\mbb{R}$. Let $0\neq W \subs V$ be a subspace and let $K\subs V$ be a co-dimension one subspace of $V$.
	If $W\nsubseteq K$ then we have $\dim(W\cap K)=\dim(W) +\dim(K)-\dim(W+K)=\dim(W)+n-1-n=\dim(W)-1$, that is, the dimension goes down by one when the subspace $W$ is 
	intersected with the co-dimension one space $K$ if $W\nsubseteq K$.
\end{note}
\subsection{Generic concurrency freeness of a normal system of six lines in a plane}
We begin with an example.
\begin{example}
	\label{example:degeneracyloci}
	Consider the example in Note~\ref{note:Nonconcurrencyfree} where we begin with six lines $L_i,1\leq i\leq 6$ in the plane $\mbb{R}^2$, no two are parallel, with the equation of the line $L_i$ given by $a_ix+b_iy=c_i, 1\leq i\leq 6$ ignoring
	the perpendicularity conditions. We derive conditions as to when the normal system is not concurrency free.
	Consider the concurrency closed sub-collections \equ{\mcl{D}_1=\{\{1,2,6\},\{1,3,5\},\{2,3,4\}\},\mcl{D}_2=\mcl{D}_1\cup\{\{4,5,6\}\}.} Using these concurrencies 
	we can obtain linear expressions for $c_4,c_5,c_6$ in terms of $c_1,c_2,c_3$ as follows.
	
	\equa{c_6&=\frac{a_6\vmattwo{c_1}{b_1}{c_2}{b_2}+b_6\vmattwo{a_1}{c_1}{a_2}{c_2}}{\vmattwo{a_1}{b_1}{a_2}{b_2}},\\
		c_5&=\frac{a_5\vmattwo{c_1}{b_1}{c_3}{b_3}+b_5\vmattwo{a_1}{c_1}{a_3}{c_3}}{\vmattwo{a_1}{b_1}{a_3}{b_3}},\\
	c_4&=\frac{a_4\vmattwo{c_2}{b_2}{c_3}{b_3}+b_4\vmattwo{a_2}{c_2}{a_3}{c_3}}{\vmattwo{a_2}{b_2}{a_3}{b_3}}.}
	
	If $L_4,L_5,L_6$ are concurrent we must have 
	\equan{det}{\vmatthree{a_4}{b_4}{c_4}{a_5}{b_5}{c_5}{a_6}{b_6}{c_6}=0 \Ra 
		\vmatthree{a_4}{b_4}{\frac{a_4\vmattwo{c_2}{b_2}{c_3}{b_3}+b_4\vmattwo{a_2}{c_2}{a_3}{c_3}}{a_2b_3-a_3b_2}}{a_5}{b_5}{\frac{a_5\vmattwo{c_1}{b_1}{c_3}{b_3}+b_5\vmattwo{a_1}{c_1}{a_3}{c_3}}{a_1b_3-a_3b_1}}{a_6}{b_6}{\frac{a_6\vmattwo{c_1}{b_1}{c_2}{b_2}+b_6\vmattwo{a_1}{c_1}{a_2}{c_2}}{a_1b_2-a_2b_1}}=0 } 
	The LHS expression of the last equation~\ref{Eq:det} after multiplying by \equ{(a_2b_3-a_3b_2)(a_1b_3-a_3b_1)(a_1b_2-a_2b_1)} is a linear polynomial in the variables $c_1,c_2,c_3$
	with coefficients as polynomials in the variables $a_i,b_i,1\leq i\leq 6$. Now the concurrency of $L_1,L_2,L_3$ implies concurrency of $L_4,L_5,L_6$, because in this case, all the six lines are concurrent. 
	Hence we obtain in addition that 
	
	\equa{&\text{the polynomial }\vmatthree{a_1}{b_1}{c_1}{a_2}{b_2}{c_2}{a_3}{b_3}{c_3} \text{ divides the polynomial or is a factor of }\\
		&(a_2b_3-a_3b_2)(a_1b_3-a_3b_1)(a_1b_2-a_2b_1)
		\vmatthree{a_4}{b_4}{\frac{a_4\vmattwo{c_2}{b_2}{c_3}{b_3}+b_4\vmattwo{a_2}{c_2}{a_3}{c_3}}{a_2b_3-a_3b_2}}{a_5}{b_5}{\frac{a_5\vmattwo{c_1}{b_1}{c_3}{b_3}+b_5\vmattwo{a_1}{c_1}{a_3}{c_3}}{a_1b_3-a_3b_1}}{a_6}{b_6}{\frac{a_6\vmattwo{c_1}{b_1}{c_2}{b_2}+b_6\vmattwo{a_1}{c_1}{a_2}{c_2}}{a_1b_2-a_2b_1}} }
	as a linear polynomial in $c_1,c_2,c_3$ with another polynomial factor \equ{g(a_1,a_2,a_3,a_4,a_5,a_6,b_1,b_2,b_3,b_4,b_5,b_6)} which does not involve $c_1,c_2,c_3$, and has only the variables $a_i,b_i,1\leq i\leq 6$.
	This polynomial $g$ gives a degeneracy condition on the coefficients $a_i,b_i;1\leq i\leq 6$ so that the concurrency $\{4,5,6\}$ occurs provided the concurrencies $\{1,2,6\},\{1,3,5\}$,  $\{2,3,4\}$ are already present.
	If $a_i,b_i;1\leq i\leq 6$ are so chosen such that the $12$-tuple $(a_1,a_2,a_3,a_4,a_5,a_6,b_1,b_2,b_3,b_4,b_5,b_6)$ does not lie on any of the finitely many degeneracy loci obtained from finitely many such 
	configurations of six lines, then the normal system $\mcl{N}=\{l_1,l_2,l_3,l_4,l_5,l_6\}$ consisting of six lines 
	\equ{l_i=\{t(a_i,b_i)\mid t\in \mbb{R},1\leq i\leq 6\}}
	in the plane $\mbb{R}^2$ is concurrency free. So generically a normal system of six lines in the plane is concurrency free.
\end{example}
\subsection{On generic normal systems}
~\\
In this section we prove a theorem about generic normal systems.
\begin{theorem}
	\label{theorem:generic}
	Let $n>m>1$ be two positive integers. Let \equ{\mcl{N}=\{L_i=\{t(a_{i1},a_{i2},\ldots,a_{im})\mid t\in \mbb{R}\},1\leq i\leq n\}} be a normal system in $\mbb{R}^m$. There exists a finite set 
	\equ{\mcl{F}=\{g_{u}\mid g_{u} \in \mbb{R}[A_{ij}:1\leq i\leq n,1\leq j\leq m], 1\leq u\leq N\}} of polynomials (degenerate loci) of cardinality $N\in \mbb{N}$ 
	such that if \linebreak $g\big([a_{ij}]_{1\leq i\leq n,1\leq j\leq m}\big)\neq 0$ for all $g\in \mcl{F}$, then the normal system $\mcl{N}$ is concurrency free.
\end{theorem}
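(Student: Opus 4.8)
The plan is to reduce the statement ``$\mathcal{N}$ is concurrency free'' to finitely many polynomial non‑vanishing conditions on the entries $a_{ij}$, and then to observe that each polynomial occurring is not identically zero because the conditions hold for a coefficient matrix with algebraically independent entries. Taking $\mathcal{F}$ to be the finite collection of these polynomials then proves the theorem.

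First I would record the shape of the normal vectors of the discriminantal hyperplanes. Write $\mathcal{E}$ for the set of all $(m+1)$-subsets of $\{1,\dots,n\}$. Fixing $S=\{i_1<\dots<i_{m+1}\}\in\mathcal{E}$ and expanding the determinant of Definition~\ref{defn:CA} along its last column, the normal vector $n_S\in\mathbb{R}^n$ of $M_S$ (in the coordinates $y_1,\dots,y_n$) is supported on the positions $i_1,\dots,i_{m+1}$, with entries the signed $m\times m$ minors $(-1)^{m+1+k}\det\big[a_{i_\ell\, j}\big]_{\ell\neq k,\ 1\le j\le m}$; in particular each coordinate of $n_S$ is a homogeneous polynomial of degree $m$ in the $a_{ij}$ with integer coefficients. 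Let $M(n,m)=D_m(B_n)$ be the Dilworth matroid on the ground set $\mathcal{E}$ (indexed by the hyperplanes $M_S$), of rank $n-m$; recall that by Crapo~\cite{MR0843374} (cf.\ Athanasiadis~\cite{MR1720104}) a coefficient matrix with algebraically independent entries makes the linear matroid of $\{n_S:S\in\mathcal{E}\}$ equal to $M(n,m)$, and that in general this linear matroid is a weak-map image of $M(n,m)$ (the generic realisation has the most independent sets).

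The heart of the plan is the following reduction, which I would prove first: \emph{if, for every basis $B$ of $M(n,m)$, the subfamily $\{n_S:S\in B\}$ is linearly independent in $\mathbb{R}^n$, then $\mathcal{N}$ is concurrency free.} Indeed, that hypothesis gives $\mathrm{rank}_{\mathrm{lin}}\ge\mathrm{rank}_{M(n,m)}$, while the weak-map property gives the reverse inequality, so the linear matroid of $\{n_S:S\in\mathcal{E}\}$ \emph{is} $M(n,m)$; then every concurrency closed $\mathcal{D}\subseteq\mathcal{E}$ is a flat of this realisation, its base collections $\widetilde{\mathcal{D}}$ (Definitions~\ref{defn:BC},~\ref{defn:BaseConstruction}) are bases of that flat, $\mathrm{rank}(\overline{\mathcal{D}})=\#\widetilde{\mathcal{D}}$ equals the matroid rank, and hence $\dim\big(\bigcap_{S\in\mathcal{D}}M_S\big)=\dim\big(\bigcap_{S\in\widetilde{\mathcal{D}}}M_S\big)=n-\#\widetilde{\mathcal{D}}=n-\mathrm{rank}(\overline{\mathcal{D}})$, which is precisely Definition~\ref{defn:ConcurrencyFree}. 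Now there are only finitely many bases $B$ of $M(n,m)$ (subsets of the finite set $\mathcal{E}$), each of size $\rho=n-m$, and for fixed $B$ linear independence of $\{n_S:S\in B\}$ is equivalent to the non-vanishing of one of the $\rho\times\rho$ minors of the $\rho\times n$ matrix with rows $n_S$ ($S\in B$) — a polynomial in the $a_{ij}$ with integer coefficients. Since, for algebraically independent $A_{ij}$, this family realises $M(n,m)$ and hence keeps every basis independent, for each $B$ at least one such minor is a non-zero polynomial $g_B\in\mathbb{Z}[A_{ij}:1\le i\le n,\ 1\le j\le m]$. Putting $\mathcal{F}=\{g_B:B\text{ a basis of }M(n,m)\}$, the set $\mathcal{F}$ is finite, each $g_B$ is not identically zero, and if $g_B\big([a_{ij}]\big)\neq0$ for all $g_B\in\mathcal{F}$ then every basis of $M(n,m)$ is realised by a linearly independent subfamily of the $n_S$, so by the reduction $\mathcal{N}$ is concurrency free.

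I expect the main obstacle to be the reduction claim: carefully matching Definitions~\ref{defn:CC}--\ref{defn:ConcurrencyFree} to matroid theory — that concurrency closed collections are flats, base collections are bases of flats, $\mathrm{rank}(\overline{\mathcal{D}})$ is the matroid rank, and, crucially, that the non-trivial implication must be phrased over \emph{every} base collection of every concurrency closed $\mathcal{D}$. That last point is exactly what the six-line example of Note~\ref{note:Nonconcurrencyfree} illustrates, where $\{\{1,2,6\},\{1,3,5\},\{2,3,4\},\{4,5,6\}\}$ is a base of the flat $\mathcal{E}$ whose normals become linearly dependent once the perpendicularities $L_1\perp L_4,\ L_2\perp L_5,\ L_3\perp L_6$ are imposed. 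Once this dictionary is in place, the construction of $\mathcal{F}$ is the routine ``a non-trivial algebraic condition is generically satisfied'' argument, relying only on the classical realisability of $M(n,m)$ by generic coefficients (\cite{MR1097620},~\cite{MR1720104},~\cite{MR1456579},~\cite{MR0843374}).
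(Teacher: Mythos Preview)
Your argument is correct and is, if anything, more rigorous than the paper's own proof. The two take genuinely different routes.

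The paper proceeds constructively in the spirit of Example~\ref{example:degeneracyloci}: for each concurrency closed $\mathcal{D}$ it works in the space of constant vectors $(c_1,\dots,c_n)$, successively imposes the determinant-vanishing concurrency constraints, and factors each constraint into a linear part in the $c_i$ times a residual polynomial in the $a_{ij}$; the residual factors are the degeneracy loci, and when they are nonzero the linear constraints are genuinely independent, so the dimension count $n-\mathrm{rank}(\overline{\mathcal{D}})$ goes through. Your approach instead passes immediately to the normal vectors $n_S$ of the discriminantal hyperplanes and to the Dilworth matroid $M(n,m)$: you identify ``concurrency free'' with ``the linear matroid of the $n_S$ equals $M(n,m)$'', reduce this via the weak-map/specialisation argument to the single requirement that every basis of $M(n,m)$ stay linearly independent, and take for $\mathcal{F}$ one nonvanishing maximal minor per basis.

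What each buys: the paper's proof is closer to an explicit recipe for writing down the degeneracy polynomials in any given case (as in the six-line example), but the factoring step and the induction over concurrencies are only sketched. Your matroid-theoretic route is cleaner and makes the logical structure transparent---in particular, the weak-map observation (linear dependence is a Zariski-closed condition, so independence at a point implies independence generically) together with Crapo's identification of the generic matroid with $D_m(B_n)$ dispatches the existence of the finite polynomial family in one stroke. The only point to be careful about in your write-up is the dictionary you flag yourself: that Definitions~\ref{defn:CC}--\ref{defn:BaseConstruction} really do translate to ``flat'' and ``basis of a flat'' in $M(n,m)$, so that once the two matroids coincide the rank equality of Definition~\ref{defn:ConcurrencyFree} follows for every concurrency closed $\mathcal{D}$ and every base collection $\widetilde{\mathcal{D}}$. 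With that made explicit, your proof stands on its own.
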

\begin{proof}
	Let $\mcl{E}=\{\{i_1,i_2,\ldots,i_m,i_{m+1}\}\mid 1\leq i_1<i_2<\ldots<i_m<i_{m+1}\leq n\}$. After fixing the matrix $[a_{ij}]_{1\leq i\leq n,1\leq j\leq m}$ let 
	$\mcl{C}^n_{\binom{n}{m+1}}=\{M_{\{i_1,i_2,\ldots,i_m,i_{m+1}\}}\mid 1\leq i_1<i_2<\ldots<i_m<i_{m+1}\leq n\}$ be the discriminantal arrangement associated to the normal system $\mcl{N}$.
	Let $\mcl{D}\subs \mcl{E}$ be a concurrency closed sub-collection. For a $k$-hyperplane concurrency
	with $k>m$, we can solve for the constants of the equations of the $(k-m)$ hyperplanes. This will be linear in terms of the remaining constants. Now we go through the configuration imposing concurrency conditions such as the vanishing of the determinant as in Example~\ref{example:degeneracyloci}. Here we factor (if possible) these into a factor which is an irreducible linear constraint in terms of the variables 
	corresponding to constants and irreducible polynomials which give degeneracy loci.  Because of generic assumption on $\big([a_{ij}]_{1\leq i\leq n,1\leq j\leq m}\big)$ we conclude that the
	first factor is zero. Now we can solve for one constant coefficient reducing the number of independent variables by one for each such concurrency constraint. 
	
	This proves that for any concurrency closed set $\mcl{D}\subs \mcl{E}$ with concurrency orders $k_1,k_2,\ldots,k_r$ we have 
	\equa{\big(\us{i=1}{\os{r}{\sum}}k_i\big)-rm&=rank(\mcl{D})=\#\big(\mcl{D}'=Base\ of\ (\mcl{D})\big)\\&=n-\dim\bigg(\us{\{i_1,i_2,\ldots,i_m,i_{m+1}\}\in \mcl{D}}{\bigcap}M_{\{i_1<i_2<\ldots<i_m<i_{m+1}\}}\bigg).}
	This proves the theorem that the generic normal system $\mcl{N}$ considered here is concurrency free.
\end{proof} 
The following note is about the condition that the normal system $\mcl{N}$ and its associated discriminantal arrangement $\mcl{C}^n_{\binom{n}{m+1}}$
and should be concurrency free to compute the characteristic polynomial $\gch(\mcl{C}^n_{\binom{n}{m+1}})$.
\begin{note}
	\label{note:mildrestriction}
	~\\
	Let $M_{n\times m}(\mbb{R}) \sups \mcl{G}=\{[a_{ij}]_{1\leq i\leq n,1\leq j\leq m}\mid \mcl{N}=\{L_i=\{t(a_{i1},a_{i2},\ldots,a_{im})\mid t\in \mbb{R}\},1\leq i\leq n$ is concurrency free $\}$.
	Then $\mcl{G}$ contains a zariski open set $O$ which is zariski dense in $M_{n\times m}(\mbb{R})$. Hence the restriction to compute the characteristic
	polynomial $\gch(\mcl{C}^n_{\binom{n}{m+1}})$ for the discriminantal arrangement $\mcl{C}^n_{\binom{n}{m+1}}$ 
	that the normal system $\mcl{N}$ is concurrency free is a mild one. Moreover we can choose generic concurrency free
	normal systems defined over rationals (resp. integers) so that its discriminantal arrangement is also defined over rationals (resp. integers). Here finite field methods will also be applicable by reducing modulo 
	certain primes at which we have good reduction.
\end{note}
\section{\bf{Enumeration of Isomorphism Classes when the Normal System is not Concurrency Free via Two Examples}}
\label{sec:NonConcurrencyFree}
In this section we enumerate the isomorphism classes when the normal system is not
concurrency free in two examples.
\begin{example}[Example of Six Lines in the Plane]
	\label{Example:SixLines}
Consider the following generic six-line arrangement $\mcl{L}_6^2=\{L_1,L_2,L_3,L_4,L_5,L_6\}$ given by 
\equa{&L_1: x_1=0, L_2: 2x_1+3x_2=-2,L_3:3x_1+2x_2=3,,\\
	&L_4:x_2=0, L_5: 3x_1-2x_2=5,L_6: 2x_1-3x_2=5.}
We have $L_1\perp L_4,L_2\perp L_5,L_3\perp L_6$. This give rise to a normal system which is not concurrency free as in Note~\ref{note:Nonconcurrencyfree}.
The discriminantal arrangement of $\binom{6}{3}=20$ hyperplanes in $\mbb{R}^6$ are given by 
\equa{M_{\{1<2<3\}}&=\{(y_1,y_2,y_3,y_4,y_5,y_6)\in \mbb{R}^6\mid -5y_1-2y_2+3y_3=0\},\\ 
	M_{\{1<2<4\}}&=\{(y_1,y_2,y_3,y_4,y_5,y_6)\in \mbb{R}^6\mid 2y_1-y_2+3y_4=0\},\\
	M_{\{1<2<5\}}&=\{(y_1,y_2,y_3,y_4,y_5,y_6)\in \mbb{R}^6\mid -13y_1+2y_2+3y_5=0\},\\
	M_{\{1<2<6\}}&=\{(y_1,y_2,y_3,y_4,y_5,y_6)\in \mbb{R}^6\mid -12y_1+3y_2+3y_6=0\},\\
	M_{\{1<3<4\}}&=\{(y_1,y_2,y_3,y_4,y_5,y_6)\in \mbb{R}^6\mid 3y_1-y_3+2y_4=0\},\\
M_{\{1<3<5\}}&=\{(y_1,y_2,y_3,y_4,y_5,y_6)\in \mbb{R}^6\mid -12y_1+2y_3+2y_5=0\},}
\equa{M_{\{1<3<6\}}&=\{(y_1,y_2,y_3,y_4,y_5,y_6)\in \mbb{R}^6\mid -13y_1+3y_3+2y_6=0\},\\
	M_{\{1<4<5\}}&=\{(y_1,y_2,y_3,y_4,y_5,y_6)\in \mbb{R}^6\mid -3y_1+2y_4+y_5=0\},\\
	M_{\{1<4<6\}}&=\{(y_1,y_2,y_3,y_4,y_5,y_6)\in \mbb{R}^6\mid -2y_1+3y_4+y_6=0\},\\
M_{\{1<5<6\}}&=\{(y_1,y_2,y_3,y_4,y_5,y_6)\in \mbb{R}^6\mid -5y_1+3y_5-2y_6=0\},\\
M_{\{2<3<4\}}&=\{(y_1,y_2,y_3,y_4,y_5,y_6)\in \mbb{R}^6\mid 3y_2-2y_3-5y_4=0\},\\
	M_{\{2<3<5\}}&=\{(y_1,y_2,y_3,y_4,y_5,y_6)\in \mbb{R}^6\mid -12y_2+13y_3-5y_5=0\},\\
	M_{\{2<3<6\}}&=\{(y_1,y_2,y_3,y_4,y_5,y_6)\in \mbb{R}^6\mid -13y_2+12y_3-5y_6=0\},\\
	M_{\{2<4<5\}}&=\{(y_1,y_2,y_3,y_4,y_5,y_6)\in \mbb{R}^6\mid -3y_2+13y_4+2y_5=0\},\\
	M_{\{2<4<6\}}&=\{(y_1,y_2,y_3,y_4,y_5,y_6)\in \mbb{R}^6\mid -2y_2+12y_4+2y_6=0\},\\
	M_{\{2<5<6\}}&=\{(y_1,y_2,y_3,y_4,y_5,y_6)\in \mbb{R}^6\mid -5y_2+12y_5-13y_6=0\},\\
	M_{\{3<4<5\}}&=\{(y_1,y_2,y_3,y_4,y_5,y_6)\in \mbb{R}^6\mid -3y_3+12y_4+3y_5=0\},\\
	M_{\{3<4<6\}}&=\{(y_1,y_2,y_3,y_4,y_5,y_6)\in \mbb{R}^6\mid -2y_3+13y_4+3y_6=0\},\\
	M_{\{3<5<6\}}&=\{(y_1,y_2,y_3,y_4,y_5,y_6)\in \mbb{R}^6\mid -5y_3+13y_5-12y_6=0\},\\
	M_{\{4<5<6\}}&=\{(y_1,y_2,y_3,y_4,y_5,y_6)\in \mbb{R}^6\mid -5y_4+2y_5-3y_6=0\}}
and 
$\mcl{C}^6_{\binom{6}{3}}=\{M_{\{i<j<k\}}\mid 1\leq i<j<k\leq 6\}$. 
Now we use the following formula for the number of convex cones of a central arrangement.
For a central arrangement $\mcl{A}$ in $n$-dimensional Euclidean space, the number of convex cones in the arrangement is given by 
\equan{Regions}{(-1)^n\gch_{\mcl{A}}(-1)=(-1)^n\us{\mcl{B}\subseteq \mcl{A}}{\sum}(-1)^{\#(\mcl{B})+n-rank(\mcl{B})}.}
This formula can be derived from Proposition $3.11.3$ on Page $283$ of R.~Stanley~\cite{MR2868112}. We take here $\mcl{A}=\mcl{C}^6_{\binom{6}{3}}=B$ to simplify notation. Here $B$ denotes the $(20\times 6)$ matrix of coefficients of the variables $y_1,\ldots,y_6$ in the hyperplanes $M_{\{i<j<k\}}$, $1\leq i<j<k\leq 6$. Then we can compute the ranks of all submatrices corresponding to any finite subset $\mcl{B}$ of the rows of $B=\mcl{A}$, and use the above formula to compute $r^{\text{Non-Concurrency Free}}(\mcl{C}^6_{\binom{6}{3}})=(-1)^6\gch^{\text{Non-Concurrency Free}}_{\mcl{C}^6_{\binom{6}{3}}}(-1)$. Upon computation we obtain that there are \equ{(-1)^6\gch^{\text{Non-Concurrency Free}}_{\mcl{C}^6_{\binom{6}{3}}}(-1)=884 \text{ convex cones}.}

In this example we have the slopes $m_i$ of the lines $L_i,1\leq i\leq 6$ are given by $m_1=0,m_2=-\frac 23,m_3=-\frac 32,m_4=\infty,m_5=\frac32,m_6=\frac23$ with $m_2=\frac 1{m_3}$. If we consider another example of six lines $L_i,1\leq i\leq 6$ with slopes $m_1=0,m_2=-1,m_3=-2,m_4=\infty,m_5=1,m_6=\frac 12$ then we have $m_2\neq \frac 1{m_3}$ and a similar computation yields 888 convex cones of its associated discriminantal arrangment. 

Now the characteristic polynomial of a discriminantal arrangement corresponding to a concurrency free normal system is given by 
\equa{\gch^{\text{Concurrency Free}}_{\mcl{C}^6_{\binom{6}{3}}}(x)&=x^6-20x^5+145x^4-426x^3+300x^2\\
	&=x^2(x-1)(x^3-19x^2+126x-300).}
This can also be directly computed as in Section~\ref{sec:Examples}. (Otherwise we may refer to Y.~Numata, A.~Takemura~\cite{MR2986882} and H.~Koizumi, Y.~Numata, A.~Takemura~\cite{MR3000446}.)
Hence we have \equ{r^{\text{Concurrency Free}}(\mcl{C}^6_{\binom{6}{3}})=(-1)^6\gch^{\text{Concurrency Free}}_{\mcl{C}^6_{\binom{6}{3}}}(-1)=892.}
For a concurrency free normal system, there are $\frac{892-884}{2}=4$ more isomorphism classes which are not represented by any isomorphism class given by this normal system which is not concurrency free.

Now we will exhibit one such isomorphism class and give a theoretical proof.
\begin{figure}[h]
	\centering
	\includegraphics[width = 0.6\textwidth]{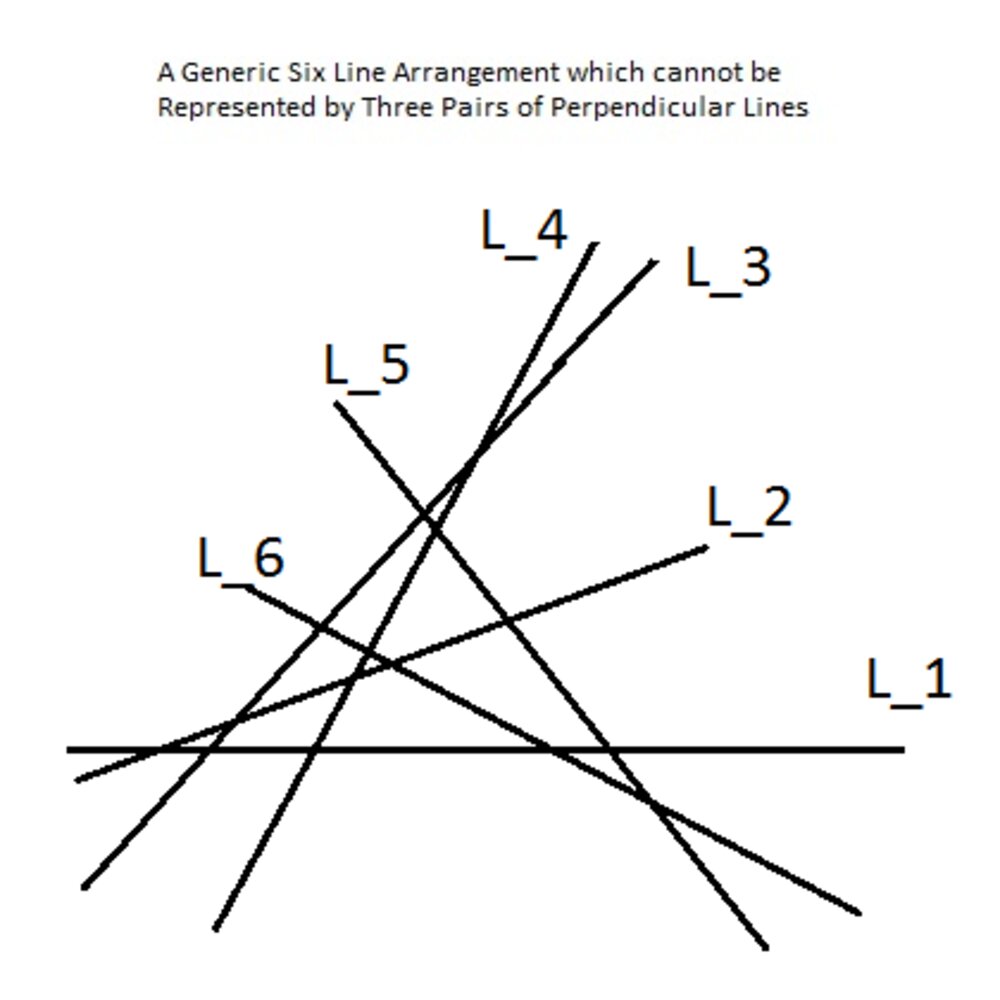}
	\caption{A Generic Six-Line Arrangements which cannot be represented Isomorphically by Three Pairs of Perpendicular lines.}
	\label{fig:Two}
\end{figure}

It is not a difficult exercise to show that the generic six-line arrangement given in Figure~\ref{fig:Two} cannot be represented isomorphically by three pairs of mutually perpendicular lines. (See Propositions~[3.3,3.4] in C.~P.~Anil Kumar~\cite{2011.05327}). We also know that any isomorphism class of a normal system which is not concurrency free can always be represented isomorphically by an isomorphism class arising from a concurrency free normal system by a density argument. Hence in this example we have proved theoretically that  
\equ{r^{\text{Concurrency Free}}(\mcl{C}^6_{\binom{6}{3}})>r^{\text{Non-Concurrency Free}}(\mcl{C}^6_{\binom{6}{3}}).}
\end{example}
\begin{example}[Example $3.2$ in M. Falk~\cite{MR1209098}]
\label{Example:SixPlanes}
Now we briefly mention another example arising from planes in $\mbb{R}^3$. 
Consider the following generic six plane arrangement  $\mcl{H}_6^2=\{H_1,H_2,\ldots,H_6\}$ in $\mbb{R}^3$ given by 
\equa{&H_1: x=0, H_2: y=0, H_3: z=0,\\ &H_4: 2x+2y+z=5, H_5: 2x+3y+2z=2, H_6: x+2y+2z=3.}
This gives rise to a normal system which is not concurrency free. Its associated discriminantal arrangement has 132 convex cones when computed in a similar manner as in Example~\ref{Example:SixLines}. 

Now the characteristic polynomial of a discriminantal arrangement corresponding to a concurrency free normal system is given by 
\equa{\gch^{\text{Concurrency Free}}_{\mcl{C}^6_{\binom{6}{4}}}(x)&=x^6-15x^5+69x^4-55x^3\\
	&=x^3(x-1)(x^2-14x+55).}
This can also be directly computed as in Section~\ref{sec:Examples} or refer to Y.~Numata, A.~Takemura~\cite{MR2986882} and H.~Koizumi, Y.~Numata, A.~Takemura~\cite{MR3000446}.
Hence we have \equ{r^{\text{Concurrency Free}}(\mcl{C}^6_{\binom{6}{4}})=(-1)^6\gch^{\text{Concurrency Free}}_{\mcl{C}^6_{\binom{6}{4}}}(-1)=140\neq 132.}
For a concurrency free normal system, there are $\frac{140-132}{2}=4$ more isomorphism classes which are not represented by any isomorphism class given by this normal system which is not concurrency free. 

\end{example}
We end the article with a final remark.
\begin{remark}
The initial few values of the number of convex cones of a discriminantal arrangement arising from a concurrency free normal system of n lines in $\mbb{R}^2$, for $3\leq n\leq 6$ are $2,8,62,892$ respectively. Hence the number of isomorphism classes of line arrangements for $3\leq n\leq 6$ are $1,4,31,446$ respectively.  We can extend the sequence of values for $n$ up to $10$ using the characteristic polynomial of discriminantal arrangements as given in Y.~Numata, A.~Takemura~\cite{MR2986882}.
\end{remark}
\section{\bf{Acknowledgement}}
I am grateful to the referee for making useful suggestions which enhanced the writing style of the article. 

\end{document}